\newtheorem{teo}{Theorem}[section]
\newtheorem{prop}[teo]{Proposition}
\newtheorem{lemma}[teo]{Lemma}
\newtheorem{re}[teo]{Remark}
\numberwithin{equation}{section}
\newcommand{\R}{\mathbb{R}}
\newcommand{\C}{\mathbb{C}}
\newcommand{\qtq}[1]{\quad\text{#1}\quad}
\renewcommand{\Re}{\operatorname{Re}}
\renewcommand{\Im}{\operatorname{Im}}
\newcommand{\la}{\left\langle}
\newcommand{\ra}{\right\rangle}
\newcommand{\raa}{\right\rangle_{L^2}}
\newcommand{\rah}{\right\rangle_{\dot H^1}}
\author{}
\begin{document}
\scrollmode
\title{Threshold solutions for the $3d$ cubic INLS: \\ the energy-critical case}

\author{Luccas Campos, Luiz Gustavo Farah, and Jason Murphy} 
\date{} 

\maketitle

\makeatletter{\renewcommand*{\@makefnmark}{}
\footnotetext{\textit{2000 Mathematics Subject classification:} 35W55, 35P25, 35B40.}\makeatother}
\makeatletter{\renewcommand*{\@makefnmark}{}
\footnotetext{\textit{Keywords:} Nonlinear Schrödinger-type equations, asymptotic behavior, scattering, blow-up.}\makeatother}
\begin{abstract}\noindent
We study the energy-critical $3d$ cubic inhomogeneous NLS equation $i\partial_t u + \Delta u + |x|^{-1}|u|^2 u=0$.  In this work, we prove the existence of special solutions $W^\pm$ with energy equal to that of the ground state $W$ and use these solutions to characterize the behavior of solutions at the ground state energy. The singular factor $|x|^{-1}$ in the nonlinearity significantly limits the smoothness of the ground state and prompts a novel approach to the modulation analysis.
\end{abstract}

\section{Introduction}

We consider the initial-value problem for the following  nonlinear Schrödinger equation with focusing inhomogeneous nonlinearity in three space dimensions: 
\begin{equation}\label{NLS}
\begin{cases} i\partial_t u + \Delta u + |x|^{-1}|u|^2 u = 0, \\[2mm]
u|_{t=0} = u_0\in \dot H^1(\R^3).
\end{cases}
\end{equation}
Here $u:\R_t\times\R_x^3\to\C$. 

The critical space of initial data for \eqref{NLS} is $\dot H^1(\R^3)$, in the sense that this norm is invariant under the scaling symmetry that preserves the class of solutions to \eqref{NLS}, namely $u(t,x)\mapsto \lambda ^{\frac12}u(\lambda^2 t, \lambda x)$ with $\lambda >0$. This scaling also preserves the conserved quantity known as the \textit{energy}
\begin{equation}
E[u] = \tfrac{1}{2}\int |\nabla u|^2\,dx - \tfrac{1}{4}\int |x|^{-1}|u|^4\,dx,
\end{equation}
and for this reason we refer to \eqref{NLS} as an \emph{energy-critical} or \emph{$\dot H^1$-critical} model. 

More generally, for the inhomogeneous NLS on $\mathbb{R}^d$ having nonlinearity $|x|^{-b}|u|^p u$, the equation is $\dot H^{s_c}$-critical in this sense if
\[
s_c:=\tfrac{d}{2}-\tfrac{2-b}{p}.
\]  
Recently there has been a great deal of progress on classifying dynamics for focusing inhomogeneous NLS in the regime $s_c\in[0,1]$, corresponding to the mass-critical case ($s_c=0$), intercritical case ($s_c\in(0,1)$), and energy-critical case $s_c=1$ (see e.g. \cite{GM21, CHL20, CL21, MiaoMurphyZheng2021, Murphy2022, CamposCardoso2022, LiuMiaoZheng2025, GuzmanXu2025, CM_Threshold_INLS_2023, Liu2024arxiv}).  An important role is played by a special solution known as the \emph{ground state}, which will be discussed in more detail below.  In particular, the mass and/or energy of the ground state can be used to define a threshold below which one has a scattering/blowup dichotomy.  At the threshold, new solution behaviors are possible (e.g. heteroclinic orbits exhibiting scattering/blowup in one direction and converging to the ground state in the other).  Incorporating these dynamics, one can obtain a complete classification of dynamics at the ground state threshold, as well.  These results (as well as the techniques used to prove them) parallel those previously obtained for  the standard power-type NLS 
(see e.g. \cite{DM_Thre, DM08, LZ09, DR10, CFR22}).  Some new challenges arise in the setting of inhomogeneous NLS due to broken translation symmetry, although the localizing effect of $|x|^{-b}$ can be exploited to achieve certain simplifications.

In general, the problem of classifying threshold dynamics for \eqref{NLS} becomes more difficult as the parameter $b$ increases, as the singularity in the nonlinearity is reflected in singular behavior of the ground state at the origin. This difficulty manifests primarily in the context of the \emph{modulation analysis}.  This refers to the analysis of solutions at times when they are close to the ground state orbit, which involves introducing modulation parameters defined implicitly via certain orthogonality conditions.  An essential step in the analysis involves estimating the variation of these modulation parameters, and it is precisely in this step that the precise decay/regularity properties of the ground state play a key role. To date, most of the results on threshold classification currently available for the inhomogeneous NLS impose constraints on the parameter $b$ that essentially guarantee that the modulation analysis developed for the power-type NLS can be employed with minimal modifications (cf. \cite{CM_Threshold_INLS_2023, Liu2024arxiv}).  One exception is our companion paper \cite{CFMarxiv25}, which extends the analysis of \cite{CM_Threshold_INLS_2023} for the $3d$ cubic INLS from $b\in(0,\tfrac12)$ to the full energy-subcritical range $b\in(0,1)$, precisely by refining the approach to the modulation analysis.  However, the methods introduced in \cite{CFMarxiv25} fall short of treating the energy-critical case $b=1$.  The contribution of the present work is to demonstrate an approach to the modulation analysis that suffices to handle even more singular regimes.

In the energy-critical case of inhomogeneous NLS, Liu, Yang, and Zhang \cite{Liu2024arxiv} previously classified threshold dynamics in dimensions $d\in\{3,4,5\}$ under the constraint $0<b<1-\tfrac12(d-4)^2$.  Roughly speaking, this is the regime in which the standard modulation analysis goes through (see \cite[Remark~1.9]{Liu2024arxiv} for a more detailed discussion).  In three dimensions the restriction is $b\in(0,\tfrac12)$.  As mentioned above, our companion paper \cite{CFMarxiv25} demonstrates a method for treating energy-subcritical nonlinearities allowing for $b\in(0,1)$.  The present work provides a new approach to the modulation analysis that can treat the energy-critical cubic nonlinearity $|x|^{-1} |u|^2 u$ in three space dimensions.  We expect that this approach can be extended to other singular settings, as well. 

Before we can state our main result, we must first properly introduce the ground state solution for \eqref{NLS}.  In the case of \eqref{NLS}, the ground state is given explicitly by 
\[
W(x) = \frac{1}{1+\tfrac{1}{2}|x|}.
\]
The function $W\in\dot H^1$ is the unique optimizer (up to symmetries) of the sharp Sobolev embedding 
\begin{equation}\label{weighted-sobolev}
\int |x|^{-1}|f|^4\,dx \leq \tfrac{3}{8 \pi}\left(\int |\nabla f|^2\,dx\right)^2.
\end{equation}
The function $W$ also satisfies the nonlinear elliptic equation 
\[
\Delta W + |x|^{-1}|W|^2W = 0,
\]
so that $W$ is also a static solution to \eqref{NLS}. 
%
%
%
%
%

As mentioned above, the behavior of solutions with energy below the ground-state energy is well understood \cite{GM21, CHL20, CL21}. Our interest lies in describing the dynamics of solutions precisely at the threshold, that is, solutions $u$ for which
\begin{equation}\label{threshold}
E[u] = E[W] = \tfrac{2\pi}{3}.
\end{equation}

Our first main result asserts the existence of two special threshold solutions (heteroclinic orbits).  Using these two new solutions, we then classify all possible solution dynamics at the threshold energy.  Our main results are the following. 

\begin{teo}[Existence of special solutions]\label{thm:Qpm}
	There exist forward-global radial solutions $W^{\pm}$ to \eqref{NLS} with
	\[	
	E[W^{\pm}] = E[W],
	\]
	satisfying
	\[
	\|W^{\pm}(t) - W\|_{\dot H^{1}} \lesssim e^{-ct},
	\]
	for some $c>0$ and all $t>0$.
	
	The solution $W^{+}$ satisfies
	\[
	\|W^{+}(0)\|_{\dot H^{1}} > \| W\|_{\dot H^{1}},
	\]
	while the solution $W^{-}$ is global, satisfies
	\[
	\| W^{-}(0)\|_{\dot H^{1}} < \| W\|_{\dot H^{1}},
	\]
	and scatters in $\dot H^{1}(\mathbb{R}^{3})$ as $t\to -\infty$.
\end{teo}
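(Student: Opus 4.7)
The plan is to follow the Duyckaerts--Merle framework \cite{DM_Thre, DM08} for constructing heteroclinic orbits attached to a ground-state equilibrium. Writing $u = W + h$ and splitting $h$ into real and imaginary parts, the linearized flow is governed by a matrix Schr\"odinger-type operator $\mathcal{L}$. I expect the spectral picture familiar from the power-type NLS and from \cite{Liu2024arxiv} to persist here: $\mathcal{L}$ admits a single pair of real simple eigenvalues $\pm e_0$ with $e_0 > 0$, with corresponding radial eigenfunctions $\mathcal{Y}^{\pm}$ decaying exponentially at infinity, together with a generalized kernel generated by the symmetries that survive radiality in the inhomogeneous setting, namely the phase mode $iW$ and the scaling mode $\Lambda W = (\tfrac{1}{2} + x\cdot\nabla)W$ (no translation mode, since $|x|^{-1}$ breaks translation invariance). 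Establishing this spectral structure, together with coercivity of $\langle \mathcal{L} h, h\rangle$ on the orthogonal complement of the unstable directions and generalized kernel, is the first main step.

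Once the spectral picture is in hand, I would construct, for each $a\in\{+1,-1\}$ and $N$ large, an approximate solution
\begin{equation}
V^{a}_N(t,x) = W(x) + \sum_{k=1}^{N} a^{k} e^{-k e_0 t}\, Z_k(x),
\end{equation}
where $Z_1$ is a real multiple of $\mathcal{Y}^{+}$ and $Z_k$ for $k\geq 2$ is defined inductively as the unique suitably decaying solution of $(\mathcal{L} - k e_0) Z_k = F_k(Z_1,\ldots,Z_{k-1})$, with $F_k$ collecting the nonlinear terms of order $k$; invertibility is available because $k e_0$ avoids the spectrum of $\mathcal{L}$ for $k\geq 2$. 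Then I would look for an exact solution of the form $u = V^{a}_N + r$ with $\|r(t)\|_{\dot H^1} \lesssim e^{-(N+\tfrac{1}{2}) e_0 t}$, solving the remainder equation on $[T_0,\infty)$ by a contraction-mapping argument based on the Strichartz/local well-posedness theory for \eqref{NLS} developed in \cite{GM21}. Propagating backward via the local theory and applying a time shift yields forward-global solutions $W^{\pm}$ with the stated exponential convergence to $W$.

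The identity $E[W^{\pm}] = E[W]$ follows from energy conservation and continuity of $E$ on $\dot H^1$ by passing to $t\to\infty$. To distinguish $W^{+}$ from $W^{-}$, I would compute the leading asymptotics
\begin{equation}
\|W^{a}(t)\|_{\dot H^1}^{2} - \|W\|_{\dot H^1}^{2} = 2a\, e^{-e_0 t}\, \langle \nabla W, \nabla Z_1\rangle + O(e^{-2 e_0 t})
\end{equation}
and invoke the nondegeneracy $\langle \nabla W, \nabla Z_1\rangle \neq 0$ (a consequence of the coercivity analysis); relabeling $\pm$ according to the sign then yields the desired strict inequalities at $t=0$. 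To obtain that $W^{-}$ is global backward and scatters as $t\to-\infty$, I would argue that, since $E[W^{-}] = E[W]$, any time $t_\star$ at which $\|W^{-}(t_\star)\|_{\dot H^1} = \|W\|_{\dot H^1}$ would force $W^{-}(t_\star) = W$ (up to the surviving symmetries) by the variational characterization via \eqref{weighted-sobolev}, contradicting the asymmetric exponential convergence at $+\infty$. Hence $\|W^{-}(t)\|_{\dot H^1} < \|W\|_{\dot H^1}$ persists on the maximal existence interval, and the sub-threshold scattering results of \cite{GM21, CHL20, CL21} apply to give global existence and backward scattering.

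I expect the principal obstacle to lie in the spectral-theoretic step: the ground state $W(x)=(1+\tfrac{1}{2}|x|)^{-1}$ is merely Lipschitz at the origin and decays only like $|x|^{-1}$ at infinity, and the linearized operator $\mathcal{L}$ inherits a potential with a $|x|^{-1}$ weight. Establishing the existence, regularity, and decay of the unstable eigenfunction $\mathcal{Y}^{+}$ and of the iterates $Z_k$, with enough control to close the Strichartz contraction in an $\dot H^1$-based space where the $|x|^{-1}$-weighted cubic nonlinearity is manageable, is delicate; this is precisely the kind of technical issue that the present paper advertises as requiring a novel approach beyond the methods of \cite{CM_Threshold_INLS_2023, CFMarxiv25, Liu2024arxiv}.
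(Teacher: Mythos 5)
Your overall architecture is the same as the paper's: once the spectral picture around $W$ is in place, the paper constructs $W^\pm$ exactly as you describe, by citing the Duyckaerts--Merle scheme \cite{DM_Thre} and its inhomogeneous adaptation \cite{Liu2024arxiv} (approximate solutions $W+\sum_k a^k e^{-ke_0t}Z_k$, a contraction for the remainder, energy conservation for $E[W^\pm]=E[W]$, the leading-order expansion of $\|W^a(t)\|_{\dot H^1}^2-\|W\|_{\dot H^1}^2$ to separate the two branches, and the sub-threshold scattering theory for the backward-in-time behavior of $W^-$). That part of your proposal is correct and is precisely what the paper imports by citation.

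The genuine gap is the step you state only as an expectation: the existence of the real simple eigenvalue pair $\pm e_0$ of $\mathcal{L}$, together with the identification of the generalized kernel. This is not a routine transplant from \cite{DM_Thre} or \cite{Liu2024arxiv}, and it is where essentially all of the paper's work on this theorem lives. The standard route --- minimizing the quadratic form of $P=L_-^{1/2}L_+L_-^{1/2}$ viewed as a relatively compact perturbation of $(-\Delta)^2$ on $H^4$ --- breaks down here because $L_+L_-L_+$ is as singular as $|x|^{-5}$ near the origin when $b=1$, so $P$ need not even map $H^4$ into $L^2$. The paper instead invokes the Lin--Zeng index/exponential-trichotomy theorem \cite{LinZeng}, which reduces the existence of an unstable eigenvalue to two finite-dimensional computations: (i) showing $\ker\mathcal{L}^2=\ker\mathcal{L}=\mathrm{span}\{\Lambda W, iW\}$, which requires an explicit spherical-harmonics/Wronskian analysis of $\ker L_+$ (nontrivial, since $W$ is only Lipschitz at the origin) and the observation that $W,\Lambda W\notin\dot H^{-1}$, and (ii) counting exactly one negative direction of the linearized energy $Q$; the index identity $\dim E^u+k_0^{\leq 0}=1$ with $k_0^{\leq 0}=0$ then forces $\dim E^u=1$ and yields a real $e_0>0$. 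Your proposal identifies this as the principal obstacle but offers no mechanism for overcoming it, so as written the proof does not close. A minor related point: the paper does not establish (and does not need) exponential spatial decay of the eigenfunctions, which you assume; it proves only $\mathcal{Y}_\pm\in H^2\cap H^{3,p}$ for $1\leq p<3/2$, and this Sobolev regularity is the actual input used to run the contraction for the remainder.
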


\begin{re} It is expected that $W^+$ should blow up in finite negative time.  However, we are presently unable to prove this due to the fact that $W^+\notin L^2(\mathbb{R}^{3})$.
\end{re}

\begin{teo}[Classification of threshold dynamics]\label{thm:threshold}
	Let $u_{0}\in \dot H^{1}(\mathbb{R}^3)$ satisfy $E[u_0] = E[W]$ and let $u$ be the corresponding solution of \eqref{NLS}. We have the following
	
	\begin{itemize}
		\item[(i)] If
		\[
	 \|u_{0}\|_{\dot H^{1}}
		< \| W\|_{\dot H^{1}},
		\]
		then $u$ either scatters as $t\to\pm\infty$ or $u=W^{-}$ up to the symmetries of the equation.
		
		\item[(ii)] If
		\[
	 \| u_{0}\|_{\dot H^{1}}
		=  \| W\|_{\dot H^{1}},
		\]
		then $u = W$ up to the symmetries of the equation.
		
		\item[(iii)] If
		\[
		 \| u_{0}\|_{\dot H^{1}}
		>  \| W\|_{\dot H^{1}}
		\]
		and $u_{0}$ is radial, then $u$ either blows up in finite positive and negative times or $u=W^{+}$ up to the symmetries of the equation.
	\end{itemize}
\end{teo}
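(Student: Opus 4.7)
The argument follows the Duyckaerts--Merle dichotomy \cite{DM_Thre}, adapted to the symmetry group of \eqref{NLS} --- here reduced by the inhomogeneity to scaling and phase only. Part (ii) is purely variational: if $E[u_0]=E[W]$ and $\|u_0\|_{\dot H^1}=\|W\|_{\dot H^1}$, the energy identity forces equality in the sharp Sobolev inequality \eqref{weighted-sobolev}, so $u_0$ is an extremizer and hence equal to $W$ up to a scaling and a phase, which propagates to all times by uniqueness.

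For (i) and (iii) the backbone is a modulation analysis near the ground-state orbit
\[
\mathcal{O} = \bigl\{e^{i\theta}\lambda^{1/2}W(\lambda\,\cdot\,) : \theta\in\R,\ \lambda>0\bigr\}.
\]
Whenever $u(t)$ lies in a sufficiently small $\dot H^1$-ball around $\mathcal{O}$, write $u(t,x) = e^{i\gamma(t)}\lambda(t)^{1/2}[W+h(t)](\lambda(t)x)$, with $h(t)$ satisfying two orthogonality conditions pinned to the symmetry generators $iW$ and $\Lambda W := (\tfrac12 + x\cdot\nabla)W$. The linearized operator $L$ around $W$ has a simple negative eigenvalue, kernel exactly spanned by $iW$ and $\Lambda W$, and positive essential spectrum, so the linearized flow exhibits one-dimensional stable/unstable directions with some rate $e_0 > 0$. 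Decomposing $h = \alpha_+\mathcal{Y}_+ + \alpha_-\mathcal{Y}_- + h^\perp$, the modulation equation yields
\[
\tfrac{d}{dt}\alpha_\pm = \mp e_0\,\alpha_\pm + O(\|h\|_{\dot H^1}^2), \qquad \langle Lh^\perp,h^\perp\rangle \gtrsim \|h^\perp\|_{\dot H^1}^2,
\]
and the threshold constraint $E[u]=E[W]$ couples $\alpha_+\alpha_-$ to $\|h^\perp\|_{\dot H^1}^2$. From this one extracts the standard trapping/ejection dichotomy: either $\|h(t)\|_{\dot H^1}\to 0$ exponentially, or $u(t)$ exits a fixed neighborhood of $\mathcal{O}$ in finite time.

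The bridging step is a rigidity/compactness argument. In case (i), suppose $u$ does not scatter in both time directions. A concentration-compactness reduction based on the $\dot H^1$-profile decomposition and the sub-threshold scattering theory of \cite{GM21, CHL20, CL21} produces times $t_n \to +\infty$ along which $u(t_n)$ is $\dot H^1$-close to $\mathcal{O}$; otherwise sub-threshold scattering would apply directly. The modulation dichotomy then forces trapping for all large $t$ with exponential convergence to $W$, and a uniqueness statement --- the fixed-point contraction producing $W^{\pm}$ in Theorem~\ref{thm:Qpm}, read as a parametrization of the stable manifold of $W$ --- identifies $u$ with $W^{-}$ up to symmetries. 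Part (iii) is analogous under radial symmetry, which restores the compactness needed above the threshold; the trapped alternative now yields $W^{+}$, while the ejection alternative gives blow-up in both time directions via the convexity/virial argument for above-threshold data.

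The principal obstacle, and the point where this paper's novelty is essential, is the modulation analysis itself. Since $W(x)=1/(1+\tfrac12|x|)$ is only Lipschitz at the origin, the scaling generator $\Lambda W$ and the nonlinear remainder $|x|^{-1}\bigl(|W+h|^2(W+h) - W^3\bigr)$ have limited regularity, so the weighted estimates underlying the orthogonal decomposition, the modulation equations, and the coercivity of $L$ on $h^\perp$ do not follow from the standard power-type NLS theory, nor from the $b<\tfrac12$ adaptations of \cite{Liu2024arxiv, CM_Threshold_INLS_2023}. Once these estimates are in place --- which is the new analytical ingredient advertised in the introduction and already needed for Theorem~\ref{thm:Qpm} --- the rigidity and uniqueness steps parallel \cite{DM_Thre, CM_Threshold_INLS_2023}, with the absence of translation invariance actually simplifying the modulation bookkeeping.
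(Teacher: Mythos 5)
Your overall architecture is the same as the paper's: (ii) by equality in the sharp weighted Sobolev inequality \eqref{weighted-sobolev} together with the Pohozaev identity; (i) and (iii) by combining spectral/modulation analysis near the orbit of $W$ with a compactness--rigidity argument and the classification machinery of \cite{DM_Thre, Liu2024arxiv}. The paper itself defers most of these steps to the literature, so your outline of them is acceptable. However, there is a genuine gap at precisely the one point where this paper is new, and your proposal does not merely leave it open --- it sets up the decomposition in a way that is known to fail at $b=1$. You impose orthogonality of $h$ to the symmetry generators $iW$ and $\Lambda W$ themselves. To close the modulation estimates one must then control the time derivative of the coefficient of $v_1$ along $W$ (the quantity $\dot\beta$ in the paper's notation), which requires pairing the equation for $v$ with $\Delta W=-|x|^{-1}W^3$ in $L^2$. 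The resulting terms, e.g. $\la |x|^{-1}|v|^3,\Delta W\ra_{L^2}\sim\int|x|^{-2}|v|^3\,dx$ and $\la v_2,L_-\Delta W\ra_{L^2}$, are not controlled by powers of $\|v\|_{\dot H^1}$ because of the accumulated singularity at the origin; this is exactly why \cite{Liu2024arxiv} is restricted to $b\in(0,\tfrac12)$ in three dimensions. Writing ``once these estimates are in place'' does not repair this: with your choice of orthogonality conditions they are \emph{not} in place, and no amount of care with weighted norms makes them so.

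The paper's resolution, which is the content you would need to supply, is to modulate against the low-frequency projections $P_{\leq M}W$ and $P_{\leq M}\Lambda W$ rather than $W$ and $\Lambda W$ (Proposition~\ref{P:new_modulation}), so that every pairing in the modulation equations involves the smooth functions $\Delta P_{\leq M}W,\ \Delta P_{\leq M}\Lambda W\in\dot H^1$ and can be estimated by $M^2\lambda^{-2}\delta(u)$ via \eqref{weighted-sobolev}. This change of gauge creates a new problem: the orthogonality conditions no longer match the negative/null directions of $Q$, so coercivity is not automatic. It is recovered by a quantitative version of the coercivity statement (Proposition~\ref{P:orthogonality}, with explicit penalty terms $\la f,W\ra_{\dot H^1}^2$ and $\la f,\Lambda W\ra_{\dot H^1}^2$) together with Lemma~\ref{L:Coercive-LM}, which shows the projection errors are $o_M(1)$ and hence harmless for $M$ large. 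A second, smaller omission: you assert the stable/unstable eigenstructure of the linearized operator as standard, but the usual route through $P=L_-^{1/2}L_+L_-^{1/2}$ also breaks down for $b=1$ (that operator is not controlled on $H^4$), and the paper instead invokes the Lin--Zeng index theory after computing the generalized kernel (Proposition~\ref{prop:generalized_kernel}). Without these two ingredients the trapping/ejection dichotomy and the exponential convergence in Proposition~\ref{prop_subcrit} --- on which the identification of $u$ with $W^{\pm}$ rests --- cannot be established.
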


\begin{re} Solutions as in \textit{(i)} will be called \emph{constrained}; solutions as in \textit{(iii)} will be called \emph{unconstrained}. 
\end{re}

As mentioned above, this theorem parallels the results for the standard power-type NLS \cite{DM_Thre, DM08, LZ09, DR10, CFR22}, as well as some existing results for inhomogeneous NLS \cite{CM_Threshold_INLS_2023, Liu2024arxiv}.  The main new challenge addressed in this paper is the presence of the factor $|x|^{-1}$, which is too singular to be addressed directly by any existing approaches.  As described briefly above, the primary technical challenge arises in the setting of the modulation analysis, stemming from the lack of regularity of $W$.  Our idea to resolve this issue is ultimately straightforward, but takes some care to implement properly.  Essentially, we modify the modulation parameters to impose orthogonality not to the ground state itself, but to a low frequency projection thereof.  This provides enough regularity to control the modulation parameters properly, but it is not clear \emph{a priori} if such orthogonality conditions are sufficient to access certain coercivity estimates for the operator $\mathcal{L}$ that arises from linearization around the ground state.  Our approach therefore requires a refined characterization of the negative and null directions for $\mathcal{L}$ in order to properly track the error terms arising from frequency projection.

Understanding the spectral properties of the linearized operator leads to some technicalities, as well. Indeed, the commutator asymptotics $[\Delta; |x|^{-1}W^2]f \sim |x|^{-3}f$ as $|x| \to 0$ destroy integrability around the origin, thus complicating the study of the eigenvalues of $\mathcal L$.  Fortunately, such difficulties have recently been addressed by Lin and Zeng \cite{LinZeng}, who reduced the problem to computing only the generalized kernel of $\mathcal L$.  More precisely, they show that only a finite number of iterates $\mathcal L^k$, $k\geq 1$ must be considered.

Our emphasis in this work is to demonstrate how a modified approach to modulation analysis allows for the treatment of more singular nonlinearities.  In particular, we will import existing results by citation when possible and focus on presenting in full detail only those parts of the analysis that are genuinely new.  With that in mind, the paper is organized as follows: In Section 2, we recall known results about the ground state and carry out the analysis of the linearized operator, which culminates in the modulation theory developed in Section 3. In Section 4, we demonstrate the exponential convergence of nonscattering constrained solutions to the ground state as $t\to\infty$ and briefly discuss the proof of the main theorems above.  

\section{Spectral Analysis}

\subsection{The ground state and the linearized operator}


We will often work with solutions which are close (in $\dot H^1(\R^3)$) to $W$. It is then natural to write $u(t,x) = W(x) + v(t,x)$, so that $v = v_1+iv_2$ solves the equation
\begin{equation}\label{linearized_1}
	i \partial_t v + \Delta v + K(v) = iR(v)
\end{equation}
where
\begin{equation}
	K(v) = |x|^{-1}W^2(3v_1 + iv_2), \qtq{and} R(v) = i|x|^{-1}W^3G(W^{-1}v),
\end{equation}
with $G(z)= |1+z|^2(1+z)-1-2z-\overline z$ being the sum of the quadratic and cubic terms of the polynomial $|1+z|^2(1+z)$.  In particular we have $G(0) = G_z(0) = G_{\bar z}(0) = 0$.

Separating the linear part of \eqref{linearized_1} into its real and imaginary parts, one has
\begin{equation}\label{linearized_2}
		i \partial_t v_1 - \partial_t v_2 +L_+ v_1-iL_-v_2 = iR(v),
\end{equation}
where
\begin{equation}
	L_+ = -\Delta - 3|x|^{-1}W^2\qtq{and}
	L_- = -\Delta - |x|^{-1}W^2.
\end{equation}

Identifying the complex number $a + bi \in \C$ with $\begin{bmatrix*}a \\b \end{bmatrix*}\in \R^2$, one can define the operator $L : \dot H^{1} \to \dot H^{-1}$ via
\begin{equation}\label{L-def}
Lv = L_+ v_1+ iL_- v_2 =  \begin{bmatrix*}L_+ v_1\\ L_- v_2 \end{bmatrix*},
\end{equation}
whose corresponding quadratic form is the \textit{linearized energy}
\begin{equation}\label{Q-def}
	Q(v) = \tfrac12\la Lv, v\raa = \tfrac12 	\la L_+ v_1,v_1\raa  +  \tfrac12 \la L_- v_2,v_2\raa.
\end{equation}
This can be polarized to a bilinear form
\begin{equation}\label{B-def}
	B(v,w) = \tfrac12 \la L_+ v_1,w_1\raa  +  \tfrac12 \la L_- v_2,w_2\raa .
\end{equation}
Note that $Q$ and $B$ can be uniquely extended to bounded operators in $\dot H^1(\R^3)$ and $\dot H^1(\R^3)\times \dot H^1(\R^3)$, respectively.

We also consider the densely defined, closed operator $\mathcal L : \text{dom}(\mathcal L) \subset \dot H^{1} \to \dot H^1$
\begin{equation}\label{calL-def}
	\mathcal{L}v  = iLv= \begin{bmatrix*}-L_-v_2 \\ L_+ v_1\end{bmatrix*} 
\end{equation}
which can also be seen, in matrix form, as the composition of the symplectic operator $J : \text{dom}(J) \subset \dot H^{-1} \to \dot H^1$,
\begin{equation}
	J = \begin{bmatrix*}0 & -I\\I & 0 \end{bmatrix*} 
\end{equation}
and $L$.

The operator $\mathcal{L}$ represents the linear intertwining between the real and imaginary parts of \eqref{linearized_2}, as 
\begin{equation}
	\partial_t v + \mathcal{L} v  = R(v).
\end{equation}%

Another important player in the analysis is the generator of scaling
\begin{equation}\label{Lambda-def}
	\Lambda f = \tfrac12 f + x \cdot \nabla f = \tfrac{d}{d\lambda} \bigl[\lambda^{\frac12} f(\lambda x)\bigr]\big|_{ \lambda=1 },
\end{equation}
which is skew-symmetric in $\dot H^1(\mathbb{R}^3)$. Note that $\Lambda W$ and $W$ are orthogonal functions in $\dot H^1(\mathbb{R}^3)$.

\subsection{Coercivity in {$\dot H^1$}}


In this section, we prove the following


\begin{prop}\label{P:orthogonality} For all $f \in \dot{H}^1(\mathbb{R}^3,\mathbb{R})$,

\begin{enumerate}[label=\upshape(\roman*),ref=\theprop (\roman*)]
	\item\label{P:orthogonality_item:1} $ \la L_+ f,f \ra_{L^2} \geq c \|f\|_{\dot H^1}^2 - O\left[ \la  f,  W\ra^2_{\dot H^{1}} + \la f,  \Lambda W\ra^2_{\dot H^{1}} \right]$.
	\item\label{P:orthogonality_item:2} $\la L_- f,f\ra_{L^2} \geq c \|f\|_{\dot H^1}^2 - O\left[ \la f,  W\ra^2_{\dot H^{1}} \right]$
\end{enumerate}

\begin{re} The expression $\langle L_+ f, f\rangle_{L^2}$ is a slight abuse of notation, but it has a clear interpretation, namely:
\[
\la L_+ f,f \ra_{L^2} := \int |\nabla f|^2\,dx - 3\int |x|^{-1}W^2f^2\,dx.
\]
\end{re}



\begin{re}\label{OrtCon-v}


In the context of the energy-critical equation \eqref{NLS}, this result implies \cite[Proposition 2.21]{Liu2024arxiv}, namely: there exists $c>0$ such that if $v \in \dot H^1(\R^3,\C)$ and $v = v_1 + i v_2$, we have
	\begin{equation}
		\la L_+ v_1,v_1\ra_{L^2} +  \la L_- v_2,v_2 \ra_{L^2} \geq c \|v\|_{\dot H^1}^2,
	\end{equation}
provided $\la  v_1,   W\ra_{\dot H^{1}} = \la v_1,  \Lambda W\ra_{\dot H^{1}} = \la v_2,  W\ra_{\dot H^{1}} =  0$. 
\end{re}
\end{prop}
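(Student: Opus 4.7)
The plan is to reduce the quantified bounds stated here to the (unquantified) coercivity of \cite[Proposition~2.21]{Liu2024arxiv}, which in particular gives $\la L_+ g, g\raa \geq c\|g\|_{\dot H^1}^2$ for $g \perp_{\dot H^1} \{W, \Lambda W\}$ and $\la L_- g, g\raa \geq c\|g\|_{\dot H^1}^2$ for $g \perp_{\dot H^1} W$. The statement here is recovered by a direct orthogonal decomposition in $\dot H^1$, with the cross terms handled via two algebraic identities: $L_- W = 0$ (immediate from the elliptic equation) and $L_+ W = -2|x|^{-1}W^3 = 2\Delta W$.

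For (ii), I would set $\alpha = \la f, W\rah / \|W\|_{\dot H^1}^2$ and $g = f - \alpha W$, so that $g \perp_{\dot H^1} W$. Using $L_- W = 0$,
\[
\la L_- f, f\raa = \la L_- g, g\raa \geq c\|g\|_{\dot H^1}^2 = c\bigl(\|f\|_{\dot H^1}^2 - \alpha^2\|W\|_{\dot H^1}^2\bigr),
\]
which is exactly (ii) once one rewrites $\alpha^2\|W\|_{\dot H^1}^2 = \la f, W\rah^2/\|W\|_{\dot H^1}^2$.

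For (i), I choose $\alpha, \beta$ so that $g := f - \alpha W - \beta\Lambda W$ is $\dot H^1$-orthogonal to both $W$ and $\Lambda W$; this is well-posed because $W \perp_{\dot H^1} \Lambda W$, as noted in the paragraph defining $\Lambda$. Since $L_+(\Lambda W) = 0$ (differentiating the elliptic equation along $\lambda^{1/2}W(\lambda\cdot)$ at $\lambda = 1$) and $L_+$ is symmetric, the expansion of $\la L_+ f, f\raa$ collapses to
\[
\la L_+ f, f\raa = \alpha^2 \la L_+ W, W\raa + 2\alpha \la L_+ W, g\raa + \la L_+ g, g\raa.
\]
The cross term vanishes: $\la L_+ W, g\raa = 2\la \Delta W, g\raa = -2\la W, g\rah = 0$ by construction of $g$. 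Combined with $\la L_+ W, W\raa = -2\|W\|_{\dot H^1}^2$ (testing the elliptic equation against $W$ gives $\|W\|_{\dot H^1}^2 = \int|x|^{-1}W^4$) and the coercivity on $\{W,\Lambda W\}^{\perp_{\dot H^1}}$,
\[
\la L_+ f, f\raa \geq -2\alpha^2\|W\|_{\dot H^1}^2 + c\|g\|_{\dot H^1}^2,
\]
and unpacking $\|g\|_{\dot H^1}^2 = \|f\|_{\dot H^1}^2 - \alpha^2\|W\|_{\dot H^1}^2 - \beta^2\|\Lambda W\|_{\dot H^1}^2$ then yields (i).

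The real work — establishing the unquantified coercivity on the orthogonal complements — is the spectral analysis carried out in \cite{Liu2024arxiv}, relying on non-negativity of $L_-$ with $W$ as ground state, identification of the single negative eigenvalue and non-degenerate kernel $\text{span}\{\Lambda W\}$ of $L_+$, and compactness arguments requiring care due to the singular factor $|x|^{-1}$. Given that input, the only genuinely subtle point in the proof above is the clean cancellation of the cross term $\la L_+ W, g\raa$, which hinges on the algebraic identity $L_+ W = 2\Delta W$ and is precisely what makes $\dot H^1$-orthogonality (rather than $L^2$-orthogonality) the natural choice for the bad directions.
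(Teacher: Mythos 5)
Your final reduction step is fine and essentially matches the paper's concluding argument: decompose $f=\gamma_1 W+\gamma_2\Lambda W+g$ with $g\perp_{\dot H^1}\{W,\Lambda W\}$, use $L_+\Lambda W=0$ and $L_+W=2\Delta W$ to handle the cross terms, and unpack $\|g\|_{\dot H^1}^2$. (Your observation that $\la L_+W,g\raa=-2\la W,g\rah=0$ vanishes exactly is in fact slightly cleaner than the paper's version, which keeps the cross term $\la L_+f,W\raa$ and absorbs it via Cauchy--Schwarz.) The same goes for the $L_-$ case.

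However, there is a genuine gap at the heart of the proposal: you take as input the coercivity of $L_+$ on $\{W,\Lambda W\}^{\perp_{\dot H^1}}$ and of $L_-$ on $W^{\perp_{\dot H^1}}$ by citing \cite[Proposition 2.21]{Liu2024arxiv}, but that reference only treats $d=3$ with $b\in(0,\tfrac12)$, whereas the present paper is precisely about the excluded endpoint $b=1$. The logical direction is the reverse of what you assume: Remark~\ref{OrtCon-v} states that the proposition being proved \emph{implies} the analogue of the Liu--Yang--Zhang statement in this setting, so that statement is not available as an input. The ``real work'' you defer is therefore exactly the content of the paper's proof, and it is not routine at $b=1$: one must show that $K=3|\nabla|^{-1}|x|^{-1}W^2|\nabla|^{-1}$ is compact on $L^2$ (so that $I-K$ has only finitely many eigenvalues below $\tfrac12$), use the Weinstein/Pohozaev variational input to place $\lambda_1<0$, $\lambda_2=0$, and — crucially — prove that $\ker L_+=\operatorname{span}\{\Lambda W\}$ by an explicit ODE analysis in each spherical-harmonic sector (solving the radial equations for $j=0$ and $j=1$ with the specific profile $W=(1+\tfrac12|x|)^{-1}$, and a monotonicity-in-$\mu_j$ comparison for $j>1$). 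Without establishing this nondegeneracy and the resulting spectral gap $\lambda_3>0$ in the $b=1$ setting, the quantitative lower bounds in (i) and (ii) do not follow. Your proof would be complete if you supplied this spectral analysis rather than outsourcing it to a reference that does not cover the case at hand.
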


\begin{proof}[Proof of Proposition~\ref{P:orthogonality}] It is immediate to check that $L_- W = L_+ \Lambda W = 0$. Moreover, $L_+W 
= -2|x|^{-1}W^3 = 2 \Delta W$. That means
\begin{equation}\label{laL+W}
\la L_+W,W\ra_{L^2}=-2\|W\|_{\dot H^1}^2<0.
\end{equation}

We now recall that $W$ is a minimizer of the Weinstein functional
\begin{equation}
    J(h) = \frac{\displaystyle\int |\nabla h|^2\,dx}{\left(\displaystyle\int |x|^{-1}|h|^4\,dx\right)^{\frac{1}{2}}},
\end{equation}

on $\dot{H}^1(\R^3,\C)$.  Thus, writing $h = f + i g$, using  the condition $\frac{d^2}{d\epsilon^2}J(W+\epsilon h)\big|_{\epsilon=0}\geq 0$ {and the Pohozaev identity
$$
\int |\nabla W|^2\,dx =\int |x|^{-1}|W|^4\,dx=\tfrac{8\pi}{3},
$$
}
we obtain
\begin{equation}\label{eq:non_negativity_Q}
    \la L_+f,f\ra_{L^2}+\la L_-g,g\ra_{L^2} \geq -\frac{2}{\displaystyle\int |\nabla W|^2\,dx}\la f,  W\ra_{\dot H^{1}}^2.
\end{equation}

Thus we see that $L_-$ is non-negative and that $L_+$ is non-negative if $\la f,  W\ra_{\dot H^{1}} = 0$. 

We now focus on $L_+$. We begin by observing that
\begin{equation}\label{L+K}
    |\nabla|^{-1}L_+|\nabla|^{-1} = I - 3|\nabla|^{-1}|x|^{-1}W^2|\nabla|^{-1} =: I - K,
\end{equation}
where $|\nabla|^{-1}$ denotes the Fourier multiplier operator with symbol $|\xi|^{-1}$.

We claim that the operator $K:L^2\to L^2$ is compact. It is enough to prove that $|x|^{-1}W^2 |\nabla|^{-1}: L^2 \to \dot{H}^{-1}$ is compact. To this end, note that for $f \in L^2(\R^3,\R)$ we have
\begin{equation}
    \||x|^{-1}W^2 |\nabla|^{-1}f\|_{L^\frac{6}{5}} \leq \||x|^{-1}W^2\|_{L^\frac{3}{2}}\||\nabla|^{-1}f\|_{L^6} \lesssim 
    \|f\|_{L^2},
\end{equation}
\begin{align}
	\||\nabla|^{\frac{1}{2}}(|x|^{-1}W^2 |\nabla|^{-1}f)\|_{L^\frac{6}{5}} &\lesssim 
	\||\nabla|^{\frac{1}{2}}(|x|^{-1}W^2)\|_{L^\frac{3}{2}}\||\nabla|^{-1}f\|_{L^6} + \||x|^{-1}W^2\|_{L^2}\||\nabla|^{-\frac{1}{2}}f\|_{L^3} \\
	&\leq 
	\|\nabla(|x|^{-1}W^2)\|_{L^\frac{6}{5}} \|f\|_{L^2} + \||x|^{-1}W^2\|_{L^2}\|f\|_{L^2} \\
	&\lesssim \|f\|_{L^2},
\end{align}
and
\begin{equation}
    \||x|^{-1}W^2|\nabla|^{-1}f\|_{L^{\frac{6}{5}}_{\{|x|\geq R\}}}\leq \tfrac{1}{R}\|W^2f\|_{L^\frac{6}{5}} \leq \tfrac{1}{R}\|W\|_{L^3}^2\||\nabla|^{-1}f\|_{L^6} \lesssim \tfrac{1}{R}\|f\|_{L^2}.
\end{equation}
Therefore, by the continuous embedding $L^{\frac{6}{5}} (\mathbb{R}^3) \hookrightarrow \dot{H}^{-1}(\mathbb{R}^3)$ and Rellich--Kondrachov Theorem, the compactness follows.

We then conclude that the eigenvalues of $I-K$ are discrete, and can only accumulate at $1$. Therefore, in the interval $(-\infty,\tfrac{1}{2}]$ the operator $I-K$ has at most a finite number of eigenvalues (counting multiplicity), say  $\{\lambda_i\}_{i=1}^{N}$, which we assume are ordered in a non-decreasing way.

By definition of the operator $K$, we have for $v\in \dot H^1(\mathbb{R}^3)$ that
$$
\la L_+v,v\ra_{L^2}=\la (I-K)w,w\ra_{L^2},
$$
where $w=|\nabla|v\in L^2(\mathbb{R}^3)$. From \eqref{laL+W} and \eqref{eq:non_negativity_Q}, we then obtain that 
\[
\la (I-K)|\nabla|W,|\nabla|W\ra_{L^2}<0
\]
 and that $I-K$ is non-negative if $\la w, |\nabla|W\ra_{L^2} = 0$, which implies $\lambda_1<0$ and $\lambda_2=0$. 

Since \eqref{L+K} ensures that $\ker L_+$ and $\ker(I-K)$ have the same dimension, it remains to show that $\ker L_+$ is spanned by $\Lambda W$. To this end, let $f$ be a solution to $L_+f=0$ and consider the spherical harmonics expansion
\begin{equation}
    f = \sum_{j=0}^{\infty}\sum_{m=-j}^{j} f_j(r) Y_j^m(\theta),
\end{equation}
where $\{Y_j^m\}_{j,m}$ is an orthonormal basis of $L^2(\mathbb{S}^2)$ satisfying
\begin{equation}
    -\Delta_{\mathbb{S}^2} Y_j^m = \mu_j Y_j^m\qtq{for all}j.
\end{equation}

We know that $\mu_j = j(j+1)$, with multiplicity $2j+1$. Recalling that $\Delta=\partial_{rr}+\frac{2}{r}\partial_r+\frac{1}{r^2}\Delta_{\mathbb{S}^2}$, we write $L_+f = 0$ in the spherical harmonics expansion as
\begin{equation}
    L_+ f = -\sum_{j=0}^{\infty} \sum_{m=-j}^{j}\left[(\partial_{rr}+\tfrac{2}{r}\partial_r-\tfrac{\mu_j}{r^2}+\tfrac{3}{r} W^2)f_j(r)\right] Y^m_j(\theta) = 0.
\end{equation}

Our goal is to show that $f_0\in \textrm{span}\{\Lambda W\}$ and $f_j=0$ for $j\geq 1$. We split the analysis in three cases.

\textbf{Case 1:} $j=0$.

In this case, $\mu_0 = 0$ and $Y_0^0 = 1$. Therefore:
\begin{equation}
    (\partial_{rr}+\tfrac{2}{r}\partial_r+\tfrac{3}{r} W^2)f_0(r)=0.
\end{equation}

We already know that $\Lambda W$ is a solution to this equation. Suppose now that $f_0$ were linearly independent of $\Lambda W$. By Abel's theorem, the Wronskian between $\Lambda W$ and $f$ must satisfy
\begin{equation}
     \Lambda W \partial_r f_0 - f_0 \partial_r \Lambda W = \tfrac{C}{r^2}.
\end{equation}
Since $r =2$ is the only zero of $\Lambda W(r) =\frac{2-r}{(r+2)^2}$, we can divide by $(\Lambda W)^2$ in a neighborhood of $r=0$ to obtain
{
\begin{equation}
    \partial_r \left(\frac{f_0}{\Lambda W} \right) = \frac{C(r+2)^4}{r^2(2-r)^2}.
\end{equation}
Integrating from $r$ to $1$, we get
\begin{equation}
    |f_0(r)| = \frac{2-r}{(r+2)^2}\left|\frac{f_0(1)}{9}-C \left(\frac{4}{r} -r - 12 \ln(r) +61-\frac{64}{2-r}\right)\right| \gtrsim \frac{1}{r}\qtq{as} r \to 0^+.
\end{equation}
}

Therefore, $f_0$ cannot belong to $L^6(\mathbb{R}^3)$, which implies $f_0 \notin \dot{H}^1(\mathbb{R}^3)$. Hence, the only radial solution is $\Lambda W$.

\textbf{Case 2:} $j=1$.

We have that $\mu_1 = 2$, with multiplicity 3. If there is a radial function $f_1$ such that $L_+(f_1Y_1^m) = 0$, for some $m \in \{-1,0,1\}$,  then
\begin{equation}
    (\partial_{rr}+\tfrac{2}{r}\partial_r-\tfrac{2}{r^2}+\tfrac{3}{r} W^2)f_1(r) = 0.
\end{equation}

This second-order equation has an explicit general solution given by
\begin{align}
    f_1(r) = c_1\frac{(r+2)^3}{r^2} + c_2 \frac{r^3+10r^2+40r}{(r+2)^2}.
\end{align}
It is immediate to check that no nonzero solution $f_1$ belongs to $\dot{H}^1(\R_+,r^2\,dr)$.

\textbf{Case 3:} $j>1$.

In this case, $\mu_j>2$ and we look for solutions to 
\begin{equation}
    (\partial_{rr}+\tfrac{2}{r}\partial_r-\tfrac{\mu_j}{r^2}+\tfrac{3}{r} W^2)f_j(r)=0.
\end{equation}
%
We first note that since $Y_1^0$ is orthogonal to the constant spherical harmonic $Y_0^0 \equiv 1$, we have 
\begin{equation}
    \int_{\mathbb{S}^2} Y_1^0 = 0.
\end{equation}
As $W$ is radial, this implies
\begin{equation}
    \la f_j Y_1^0,W\ra_{\dot{H}^1} = 0.
\end{equation}
Next, we compute
\begin{align}
    -\langle &(\partial_{rr}+\tfrac{2}{r}\partial_r-\tfrac{\mu_j}{r^2}+\tfrac{3}{r} W^2) f_j,f_j\rangle_{L^2(\R_+,r^2\,dr)} \\
    &= -\la(\partial_{rr}+\tfrac{2}{r}\partial_r-\tfrac{2}{r^2}+\tfrac{3}{r} W^2)f_j,f_j\ra_{\hspace{-1pt}L^2(\R_+,r^2\,dr)}\hspace{-2pt}+(\mu_j-2)\hspace*{-2pt}\int \tfrac{|f_j(|x|)|^2}{|x|^2}\,dx\\
    &>-\la\bigl[(\partial_{rr}+\tfrac{2}{r}\partial_r-\tfrac{2}{r^2}+\tfrac{3}{r} W^2)f_j\bigr]Y_1^0,f_jY_1^0\ra_{L^2(\mathbb S^2 \times \R_+, r^2\,d\sigma_{\mathbb S^2}dr)}\\
    &=-\la\bigl(\Delta+\tfrac{3}{|x|} W^2\bigr)(f_jY_1^0),f_jY_1^0\ra_{L^2(\R^3)}\\
    &=\la L_+(f_jY_1^0),f_jY_1^0\ra_{L^2(\R^3)}\geq 0.
\end{align}
The last inequality follows from the fact that $L_+$ is non-negative on directions which are orthogonal to $W$. Thus, there cannot exist any kernel function associated to $\mu_j$ if $j>1$.

Based on the analysis of the three cases above, it follows that $\ker L_+$ is one-dimensional and spanned by $\Lambda W$. Moreover, $\lambda_3>0$ and 
\begin{equation}
\la L_+ f,f\ra_{L^2} \geq \lambda_3 \|f\|_{\dot H^1}^2,\quad \mbox{if} \quad \la  f,   W\ra_{\dot H^{1}} = \la f,  \Lambda W\ra_{\dot H^{1}}  =  0.
	\end{equation}

For general $f \in \dot H^1 (\R^3, \R)$ we now write 
\begin{equation}
	\gamma_1 = \frac{\la f,  W\ra_{\dot H^{1}}}{\|\nabla W\|_{L^2}^2}\qtq{and} \gamma_2 = \frac{\la f, \Lambda W\ra_{\dot H^{1}}}{\|\nabla \Lambda W\|_{L^2}^2},
\end{equation}
so that $g:= f - \gamma_1 W - \gamma_2 \Lambda W\perp \text{span}\{W,\Lambda W\}$ in $\dot{H}^1$. We then have
\begin{equation}
	\|g\|_{\dot H^1}^2 \lesssim \la L_+g,g\raa = \la L_+f,f\raa-2\gamma_1 \la L_+f,W\raa + \gamma_1^2 \la L_+W,W\raa.
\end{equation}
Noting that $\|g\|_{\dot H^1}^2 = \|f\|_{\dot H^1}^2 - \gamma_1^2 \|W\|_{\dot H^1}^2 - \gamma_2^2\|\Lambda W\|_{\dot H^1}^2$ and $|\gamma_1\la L_+f,W\raa| \lesssim \epsilon \|f\|_{\dot H^1}^2 + C_{\epsilon}\gamma_1^2$, we conclude Proposition \ref{P:orthogonality_item:1}.

For the corresponding result for $L_-$, we note that,
\begin{equation}
	\la L_-f,f\raa  = \tfrac{2}{3} \|f\|_{\dot{H}^1}^2 + \tfrac{1}{3}\la L_+f,f\raa \geq  \tfrac{2}{3}\|f\|_{\dot{H}^1}^2
\end{equation}
if $\la f, W\rah=0$, since $L_+$ is non-negative on this subspace. The remainder of the proof follows analogously.\end{proof}

\subsection{Eigenvalues and generalized kernel}

We aim to show that $\mathcal{L}$ has exactly two real eigenvalues, which we denote by $\pm e_0$, with $e_0 >0$. To this end we are interested in solving
\begin{equation}\label{eq:eigenvalue_Y1_Y2}
	\begin{cases}
		L_+ Y_1 &= -e_0 Y_2,\\
		L_- Y_2 &= e_0 Y_1
	\end{cases}
\end{equation}
in $\dot H^1(\R^3, \R) \times \dot H^1(\R^3, \R)$. In \cite{DM_Thre}, the idea is to minimize the quadratic form generated by the operator $P = L_-^{\frac12}L_+L_-^{\frac12}$. In that context, $P$ is a relatively compact perturbation of $(-\Delta)^2$ in $H^4$, since the ground state is smooth. In the inhomogenous context, Liu--Yang--Zhang \cite{Liu2024arxiv} work in the regime $0<b<\min\{N/4,N-2\}$ and claim an equivalent result. However, observing that
$$
\|Pf\|^2_{L^2}=\int (L_-^{\frac12}f)L_+L_-L_+(L_-^{\frac12}{f}),
$$
we see that the operator $P$ might not even be bounded from $H^4$ to $L^2$, since $L_+L_-L_+$ behaves as singularly as $|x|^{-b-4}f$ around the origin.


We therefore take a different approach, in the spirit of Yang, Zeng and Zhang \cite{YZZ22}.  This approach relies in turn on a result of Lin and Zeng \cite{LinZeng}, which reduces the problem to precluding nonpositive directions of the quadratic form $Q$
restricted to the generalized kernel of 
$\mathcal{L}$ 
modulo $\ker L$. In particular, we prove the following: 
\begin{prop}\label{prop:generalized_kernel}
We have that $\ker \mathcal{L}^2 = \ker \mathcal{L} = \text{span}\{\Lambda W, iW\}$. Moreover, the number of negative directions of $Q$ in $\dot H^1(\R^3, \C)$ is $1$. 
\end{prop}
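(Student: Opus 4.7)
My plan splits the proposition into three claims: the kernel, the generalized kernel, and the signature of $Q$.

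\textbf{The kernel.} The inclusion $\operatorname{span}\{\Lambda W, iW\} \subseteq \ker \mathcal{L}$ is immediate from $L_+\Lambda W = 0$ and $L_-W = 0$, both recorded in the proof of Proposition~\ref{P:orthogonality}. For the reverse inclusion, $\mathcal{L}v = 0$ decouples into $L_+v_1 = L_-v_2 = 0$; the identification $\ker L_+ = \operatorname{span}\{\Lambda W\}$ is precisely what the proof of Proposition~\ref{P:orthogonality} establishes, and for $L_-$ I will write $v_2 = \gamma W + h$ with $h \perp_{\dot H^1} W$ and apply Proposition~\ref{P:orthogonality_item:2} to obtain $0 = \langle L_-h, h\rangle_{L^2} \geq c\|h\|_{\dot H^1}^2$, forcing $h = 0$.

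\textbf{The generalized kernel.} If $\mathcal{L}^2 v = 0$ then $\mathcal{L}v \in \ker\mathcal{L}$, so $L_+ v_1 = bW$ and $L_- v_2 = -a\Lambda W$ for some $a,b\in\mathbb{R}$; the aim is to conclude $a = b = 0$. For $b$, I plan a clean algebraic argument: decompose $v_1 = \alpha W + \beta\Lambda W + g$ with $g$ $\dot H^1$-orthogonal to $\operatorname{span}\{W,\Lambda W\}$; pair against $W$ in duality (using $L_+ W = 2\Delta W$) to get $\alpha = -b/2$; then pair against $v_1$ and expand, canceling cross terms via $L_+\Lambda W = 0$ and the orthogonality of $g$ to arrive at $\langle L_+ g, g\rangle_{L^2} = 0$; Proposition~\ref{P:orthogonality_item:1} then forces $g = 0$. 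Substituting $v_1 = -\tfrac{b}{2}W + \beta\Lambda W$ back gives $L_+ v_1 = b|x|^{-1}W^3$, which equals $bW$ only if $b = 0$ (as functions, $|x|^{-1}W^3$ is singular at the origin while $W(0) = 1$).

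For $a$, the analogous algebraic trick will not work because $L_-$ is nonnegative and $\Lambda W \perp_{\dot H^1} W$, so the corresponding pairings are inconclusive. I will instead reduce to the radial sector via spherical harmonics: for $j \geq 1$, the $Y_j^m$-components of $v_2$ are $\dot H^1$-orthogonal to $W$, and Proposition~\ref{P:orthogonality_item:2} combined with $L_-^{(j)} \geq L_-^{(0)}$ kills them. The radial part satisfies, after setting $u(r) = rv_2(r)$, the ODE $u'' + r^{-1}W^2 u = ar\Lambda W$. At infinity $W \sim 2/r$ and $\Lambda W \sim -1/r$, so any particular solution has $u \sim -\tfrac{a}{2}r^2$, while the two linearly independent homogeneous solutions (one being $u_W = rW$, bounded at infinity; the other growing at most linearly since $u'' \approx 0$ asymptotically) cannot cancel this quadratic growth. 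Hence $v_2 = u/r \sim -\tfrac{a}{2}r$ with $v_2' \notin L^2(r^2\,dr)$, contradicting $v_2 \in \dot H^1$ unless $a = 0$. A quantitative version multiplies the ODE by $u_W$ and integrates from $0$ to $R$; expanding both sides asymptotically yields a $\log R$ mismatch (the left side grows like $-2aR + O(1)$ while the right side grows like $a(-2R + 16\log R) + O(1)$).

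\textbf{Signature and main obstacle.} Writing $Q(v) = \tfrac12 \langle L_+ v_1, v_1\rangle_{L^2} + \tfrac12\langle L_- v_2, v_2\rangle_{L^2}$, the proof of Proposition~\ref{P:orthogonality} established that $L_-$ is nonnegative and that $I - K$ (conjugate to $L_+$ via $|\nabla|$) has exactly one strictly negative eigenvalue, so $Q$ has precisely one negative direction. The hardest step is the $L_-$ portion of the generalized-kernel analysis: for $L_+$ the obstruction is algebraic and detected by orthogonality plus coercivity, whereas for $L_-$ the obstruction is asymptotic at infinity, requiring careful ODE analysis to ensure that no linear combination of homogeneous solutions can restore sufficient decay for $\dot H^1$ membership.
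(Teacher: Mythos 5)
Your kernel identification and your count of negative directions are fine and essentially match the paper (the paper gets "at most one negative direction" directly from \eqref{eq:non_negativity_Q} and "at least one" from $Q(W)<0$). The issue is in your generalized-kernel argument, specifically the $L_+$ branch.

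The step "pair against $W$ in duality \dots to get $\alpha=-b/2$" does not survive scrutiny. Pairing the equation $L_+v_1=bW$ against $W$ gives, on the left, $\la v_1, L_+W\raa=\la v_1,2\Delta W\raa=-2\alpha\|W\|_{\dot H^1}^2$, but on the right it gives $b\int W^2\,dx$, and $W\notin L^2(\R^3)$ (indeed $W\sim 2|x|^{-1}$ at infinity), so this integral diverges. There is no legitimate reading of the pairing under which the right-hand side becomes $b\|W\|_{\dot H^1}^2$, which is what your identity $\alpha=-b/2$ would require. The same problem afflicts the next step: pairing against $v_1$ produces $b\int Wv_1\,dx$ on the right, and with only $v_1\in\dot H^1\subset L^6$ and $W\in L^{3,\infty}$ this integral need not converge either, so you cannot extract $\la L_+g,g\raa=0$. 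The honest output of your own computation is that finiteness of the left-hand side forces $b\int W^2\,dx<\infty$, hence $b=0$ outright --- which is precisely the paper's (much shorter) argument: $\mathcal L$ maps $\dot H^1$ into $\dot H^{-1}$, while $W$ and $\Lambda W$ do not belong to $\dot H^{-1}$ (test against the $\dot H^1$-bounded family $n^{-1/2}\phi(\cdot/n)$), so $\mathcal Lv=a\Lambda W+biW$ forces $a=b=0$ immediately. Your ODE/asymptotic analysis for the $L_-$ branch is correct in spirit (it amounts to re-deriving that $\Lambda W$ is not in the range of $L_-$ on $\dot H^1$ via the $u\sim -\tfrac{a}{2}r^2$ growth of the particular solution), but it is far more labor than needed, and it would also have handled the $L_+$ branch, making the flawed algebraic detour unnecessary. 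I recommend replacing the entire generalized-kernel argument with the $\dot H^{-1}$ membership observation.
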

\begin{proof}
	Recall that the kernel of $\mathcal L$ in $\dot H^1(\R^3, \C)$ is spanned by $\Lambda W$ and $iW$. Now, if $\mathcal{L}^2v = 0$, then $\mathcal{L}v \in \ker \mathcal{L}$, which means $\mathcal{L}v = a \Lambda W + b i W$, for $a, b \in \R$. However, since $\mathcal{L}$ maps $\dot H^1(\R^3, \C)$ into $\dot H^{-1}(\R^3, \C)$, it is enough to show that $W, \,\Lambda W \notin \dot H^{-1}$ to conclude $a=b=0$. This can be seen by testing $W$ and $\Lambda W$ against the family of $\dot H^1$ norm-invariant rescalings $\{{n^{-1/2}}\phi(\cdot/n)\}_{n \in \mathbb Z_{>0}}$ of a bump function $\phi$ adapted to the unit ball centered at the origin.

	For the negative directions of $Q$, we recall that $Q(W) < 0$. Now, by \eqref{eq:non_negativity_Q}, we have that $Q(v) \geq 0$ if $\la v, W\rah = 0$, which guarantees that the number of negative directions does not exceed 1.
\end{proof}

With this result in hand, we now have grounds to invoke \cite[Theorem 2.2]{LinZeng} in order to conclude:
\begin{prop}
	The operator $\mathcal{L}$ defined in \eqref{calL-def}, generates a $C^0$ group $e^{t\mathcal{L}}$ of bounded linear operators in $\dot H^1(\R^3, \C)$, and there is a decomposition
	\begin{equation}
		\dot H^1(\R^3, \C) = E^u \oplus E^c \oplus E^s,
	\end{equation}
	into the \textit{unstable}, \textit{center} and \textit{stable} spaces of $\mathcal{L}$ satisfying:
	\begin{enumerate}
		\item $E^u$, $E^c$, $E^s \subset \text{dom}(\mathcal{L})$ are invariant under $e^{t\mathcal{L}}$,
 		\item $\dim E^u = \dim E^s = 1$,
		\item There exists $e_0>0$ such that
		\begin{align}
			\label{eq:e_0_E^u}e^{t\mathcal{L}}v &= e^{e_0 t} v \quad \forall v \in E^{u},\\
			\label{eq:e_0_E^s}e^{t\mathcal{L}}w &= e^{-e_0 t} w \quad \forall w \in E^{s}.
		\end{align}
	
	\end{enumerate}
\end{prop}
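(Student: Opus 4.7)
The plan is to invoke \cite[Theorem 2.2]{LinZeng} directly, since Propositions \ref{P:orthogonality} and \ref{prop:generalized_kernel} have already assembled all of the required hypotheses. The verification reduces to collecting: (i) $L : \dot H^1 \to \dot H^{-1}$ is bounded and self-adjoint, which is immediate from \eqref{L-def}; (ii) $J$ is the standard anti-self-adjoint symplectic operator from \eqref{calL-def}; (iii) the Morse index of $Q$ equals $1$, together with the coercivity of $Q$ modulo $\ker L$ and the single negative direction, which is precisely the content of Proposition \ref{P:orthogonality} combined with Proposition \ref{prop:generalized_kernel}; and (iv) the generalized-kernel identity $\ker \mathcal{L}^2 = \ker \mathcal{L}$. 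Items (i) and (ii) are essentially definitional, while (iii) and (iv) encode the nontrivial spectral content supplied by the previous two subsections.

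Granting the hypotheses, Lin--Zeng's theorem produces the $C^0$-group $e^{t\mathcal{L}}$ and the decomposition $\dot H^1(\R^3,\C) = E^u \oplus E^c \oplus E^s$, from which item (1) is immediate. To pin down the dimensions in item (2), I would appeal to the Lin--Zeng index identity
\[
k_r + 2 k_c + 2 k_i^- = n^-(Q) = 1,
\]
where $k_r$, $k_c$, $k_i^-$ count, respectively, the algebraic multiplicities of eigenvalues of $\mathcal{L}$ that are positive real, lie in the open first quadrant of $\C$, or are purely imaginary with negative $Q$-signature. The only nonnegative integer solution is $k_r = 1$ and $k_c = k_i^- = 0$, so $\mathcal{L}$ admits a unique simple positive eigenvalue $e_0 > 0$. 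The Hamiltonian symmetry of $\mathcal{L} = JL$ (with $J^{\ast} = -J$ and $L^{\ast} = L$) forces the spectrum to be invariant under $\lambda \mapsto -\lambda$, yielding a simple eigenvalue $-e_0$ as well. This identifies $E^u$ and $E^s$ as the one-dimensional eigenspaces associated with $\pm e_0$, and item (3) is then just the action of $e^{t\mathcal{L}}$ on these eigenspaces.

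The main point requiring care will be matching the functional-analytic framework of \cite{LinZeng} -- formulated in a real Hilbert space with symplectic structure -- to our complex setting $\dot H^1(\R^3,\C)$ via the isomorphism $a + ib \leftrightarrow (a,b)$, and verifying that the duality-pairing and symplectic-form conventions align. Once this identification is in place, each hypothesis of \cite[Theorem 2.2]{LinZeng} can be matched one-by-one to Propositions \ref{P:orthogonality} and \ref{prop:generalized_kernel}, and the proposition follows without further input.
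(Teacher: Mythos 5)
Your proposal is correct and follows essentially the same route as the paper: both invoke \cite[Theorem 2.2]{LinZeng} for the group and the decomposition, and both obtain $\dim E^u=\dim E^s=1$ from the Lin--Zeng counting formula together with the single negative direction of $Q$ and the triviality of $\ker_g\mathcal{L}/\ker L$ (your displayed identity is the counting formula of \cite[Theorem 2.3]{LinZeng} with the term $k_0^{\leq 0}$ already set to zero, which is exactly what $\ker\mathcal{L}^2=\ker\mathcal{L}$ buys). The only cosmetic difference is that the paper deduces $e_0\in\R$ from the real-valuedness of $L_\pm$, whereas you get it from $k_c=0$; the two arguments are equivalent.
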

\begin{proof}
The decomposition of $\dot H^1$ and item \textit{1} follow directly from Theorem 2.2 in \cite{LinZeng}. Moreover, since $L$ has exactly one negative direction, \cite[Theorem 2.3]{LinZeng} states that
\begin{equation}
\dim E^u  + k^{\leq 0}_0 = 1,
\end{equation}
where, by equation (2.12) of \cite{LinZeng} $k^{\leq 0}_0$ is the number of nonpositive directions of the quadratic form  $Q$ restricted to $\ker_g \mathcal L/\ker L$, \textit{i.e.}, the quocient space between the generalized kernel of $\mathcal{L}$ and the kernel of $L$. By Proposition \ref{prop:generalized_kernel}, $\ker_g\mathcal L/\ker L = \{0\}$, which guarantees $k^{\leq 0}_0 = 0$. Therefore, $\dim E^u = 1$, which proves item \textit{2}. Next, equations \eqref{eq:e_0_E^u} and \eqref{eq:e_0_E^s} follow from the just proved items \textit{1} and \textit{2}, and from item \textit{6} of \cite[Theorem 2.1]{LinZeng}. This implies the existence of $e_0 \in \mathbb C$ with $\Re e_0>0$ and of a function $\mathcal{Y}_+ \in E^u$ such that $Y_1 := \Re \mathcal Y_+$ and $Y_2 :=\Im\mathcal Y_+$ satisfy \eqref{eq:eigenvalue_Y1_Y2}. Since $L_+$ and $L_-$ are real-valued operators, we conclude $e_0 \in \R$, which finishes the proof.
\end{proof}

\begin{re}
	As a consequence of this proposition, we can define $\mathcal Y_-  := \overline{\mathcal Y_{+}}$ so that $\mathcal{L} \mathcal Y_{\pm} = \pm e_0 \mathcal Y_{\pm}$. Using the system \eqref{eq:eigenvalue_Y1_Y2} and the weighted critical Sobolev inequality \eqref{weighted-sobolev}, it is then immediate to check that
	\begin{equation}
		e_0\|\mathcal{Y}_{+}\|^2_{L^2}  = 2\int|x|^{-1}W^2Y_1Y_2\,dx\leq 2\|W\|^2_{\dot H^1}\|Y_1\|_{\dot H^1}\|Y_2\|_{\dot H^1} < \infty.
	\end{equation}
Now, since $-\Delta Y_1 = -e_0 Y_2 + 3 |x|^{-1}W^2Y_1$ and $-\Delta Y_2 = e_0 Y_1 +  |x|^{-1}W^2Y_2$, the inequality
$$
\|x|^{-1}W^2Y_j\|_{L^2}\leq \|W\|^2_{L^{\infty}}\|\nabla Y_j\|_{L^2}
$$
implies $\mathcal{Y}_{\pm} \in H^2(\R^3)$ and Sobolev embedding gives $\mathcal{Y}_{\pm} \in L^\infty(\R^3)$. 

We further claim that 
\begin{equation}\label{Y-props}
\mathcal{Y}_\pm\in H^{3,p}(\R^3)\qtq{for all}1\leq p<\tfrac32,
\end{equation} so that (in particular) $(-\Delta)^{\frac32}\mathcal{Y}_{\pm}$ belongs to $L^{\frac65}(\R^3)\subset L^{\frac65,2}(\R^3)$ (as in \cite[Lemma 2.23(b)]{Liu2024arxiv}). 

To prove \eqref{Y-props}, first note 
$$
|\nabla \Delta Y_1|\lesssim  |\nabla Y_2| +  |x|^{-2}W^2|Y_1|+|x|^{-1}|W\nabla W| |Y_1|+|x|^{-1}W^2|\nabla Y_1|
$$
and observe the inequalities
\begin{equation}
	\||x|^{-2}W^2 Y_j\|_{L^p} \lesssim \||x|^{-1}W\|_{L^{2p}}^2  \|Y_j\|_{L^{\infty}} \lesssim \|\nabla  W\|_{L^{2p}} \|Y_j\|_{H^2} <\infty, \quad 1\leq p < 3/2,
\end{equation}
\begin{equation}
		\||x|^{-1}W^2 |\nabla Y_j|\|_{L^p} \lesssim \||x|^{-1}W^2\|_{L^{6p/(6-p)}}  \|\nabla Y_j\|_{L^6}  <\infty, \quad 1\leq p<2.
	\end{equation}
Furthermore, since $|W\nabla W| \in L^{\infty}$,
\begin{equation}
	\||x|^{-1}|W\nabla W| Y_j\|_{L^p} \lesssim \|W\nabla W\|_{L^{\infty}}  \||x|^{-1}Y_j\|_{L^p} \lesssim \|W\nabla W\|_{L^{\infty}} \|\nabla Y_j\|_{L^p} <\infty, \quad 1\leq p < 3.
\end{equation}
Combining the estimates above, we derive \eqref{Y-props}. 
\end{re}

Using a standard (co-)dimensional counting, we can then state coercivity under adapted orthogonality conditions related to the bilinear form \eqref{B-def}:

\begin{prop}[{\cite[Lemma 5.2]{DM_Thre}}, {\cite[Lemma 2.24]{Liu2024arxiv}}] If $v = v_1 + i v_2 \in \dot H^1(\R^3, \C)$ is such that $\la v_1, \Lambda W\rah = \la v_2, W \rah = B(\mathcal Y_+,v) = B(\mathcal Y_-,v)= 0$, then
	\begin{equation}
		Q(v) \gtrsim \|v\|_{\dot H^1}^2.
	\end{equation}
\end{prop}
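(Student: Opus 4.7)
The plan is to use the spectral decomposition $\dot H^{1}(\R^3,\C)= E^u\oplus E^c\oplus E^s$ provided by the previous proposition: the two $B$-orthogonality conditions $B(\mathcal Y_\pm,v)=0$ confine $v$ to the center subspace $E^c$, on which $Q$ is nonnegative with kernel equal to $\ker L=\mathrm{span}\{\Lambda W, iW\}$; the two $\dot H^1$-orthogonality conditions then eliminate this kernel; and a standard compactness argument upgrades positivity to quantitative coercivity. Throughout, let $V\subset\dot H^1(\R^3,\C)$ denote the subspace cut out by all four orthogonality conditions.

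For the first step, I would first record the Hamiltonian identity $B(\mathcal L u,v)=-B(u,\mathcal L v)$, obtained by differentiating the conservation of $Q$ along $e^{t\mathcal L}$. Together with $\mathcal L\mathcal Y_\pm=\pm e_0\mathcal Y_\pm$, this immediately gives $B(\mathcal Y_\pm,\mathcal Y_\pm)=0$; applied with $v^c\in E^c$ and using the $\mathcal L$-invariance of $E^c$, it also yields $B(\mathcal Y_\pm,E^c)=0$. Next, $B(\mathcal Y_+,\mathcal Y_-)\neq 0$, since otherwise $B$ would vanish on the entire 2-dimensional subspace $\mathrm{span}\{\mathcal Y_+,\mathcal Y_-\}$, contradicting the fact (from Proposition~\ref{prop:generalized_kernel}) that this space must contain the unique negative direction of $Q$. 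Decomposing $v=\alpha\mathcal Y_++v^c+\beta\mathcal Y_-$, the conditions $B(\mathcal Y_\pm,v)=0$ collapse to $\beta B(\mathcal Y_+,\mathcal Y_-)=0$ and $\alpha B(\mathcal Y_-,\mathcal Y_+)=0$, forcing $\alpha=\beta=0$ and hence $v\in E^c$.

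For the second step, $Q\ge 0$ on $E^c$ because the single negative direction of $Q$ (Proposition~\ref{prop:generalized_kernel}) is already absorbed into $E^u\oplus E^s$, and any residual $Q$-null direction in $E^c$ must lie in $\ker L\subset\ker\mathcal L\subset E^c$ by the variational characterization of the kernel used in Proposition~\ref{P:orthogonality}; the $\dot H^1$-orthogonalities $\la v_1,\Lambda W\rah=\la v_2,W\rah=0$ remove exactly these directions. To promote positivity to coercivity, I would argue by contradiction: a normalized sequence $v_n\in V$ with $Q(v_n)\to 0$ has a weak $\dot H^1$-limit $v_\infty\in V$, and the compactness of the perturbation $K$ isolated in the proof of Proposition~\ref{P:orthogonality} upgrades weak to strong convergence in the lower-order part of $Q$, yielding $Q(v_\infty)\le 0$; the previous paragraph then forces $v_\infty=0$, and the same compactness gives $\|v_n\|_{\dot H^1}\to 0$, contradicting the normalization. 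The principal obstacle is justifying the structural identities $B(\mathcal Y_\pm,E^c)=0$ and $B(\mathcal Y_+,\mathcal Y_-)\neq 0$, which encode the Krein-space geometry built into the Lin--Zeng trichotomy; once they are in hand, everything else is a routine consequence of the coercivity and compactness already developed in Section~2.
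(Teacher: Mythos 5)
Your argument is correct and is essentially the standard (co-)dimension-counting proof that the paper imports by citation from \cite{DM_Thre} and \cite{Liu2024arxiv}: confine $v$ to $E^c$ via the $B$-orthogonality to $\mathcal Y_\pm$, kill $\ker L$ via the $\dot H^1$-orthogonality conditions, and upgrade strict positivity to coercivity by the usual compactness argument using the compact operator $K$ from Proposition~\ref{P:orthogonality}. The only caveat is that the two structural facts you flag --- $B(\mathcal Y_\pm,E^c)=0$ and $Q|_{E^c}\geq 0$ --- do not follow from the antisymmetry identity and $\mathcal L$-invariance alone (one needs the subexponential growth of $e^{t\mathcal L}$ on $E^c$ and the index formula $n^-(Q)=\dim E^u + k_0^{\leq 0}$), but both are explicitly part of the Lin--Zeng theorems already invoked in Section~2, so citing them closes the argument.
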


\section{Modulation Theory}
With Proposition \ref{P:orthogonality} in hand, we are in a position to carry out an adapted modulation theory using a smoothed version of $W$. 

First, we define
\begin{equation}
	\delta(u) = \left|\|u\|_{\dot{H}^1}^2 - \|W\|_{\dot{H}^1}^2 \right|,
\end{equation}
which is a quantity whose smallness determines that we are in the modulation regime. We prove the modulation results in two steps. 

We then introduce the Littlewood--Paley projection operators.  We fix a radially decreasing function $\phi \in C^\infty_c(\R^3)$ satisfying $\phi(x) = 1$ if $|x|\leq 1$ and $\phi(x) = 0$ if $|x| \geq 2$.  Given $M\in 2^{\mathbb{Z}}$, we define the Fourier multiplier operator $P_{\leq M}$ via
\begin{equation}
	\widehat{P_{\leq M} f}(\xi) = \phi(\tfrac{\xi}{M})\hat f(\xi),
\end{equation}
where $\hat{\ }$ denotes Fourier transform.  Likewise, $P_{>M} f := f - P_{\leq M}f$.  

Note that $P_{\leq M} W$ remains real-valued for any choice of $M$.  The parameter $M$ will ultimately be fixed, chosen sufficiently large depending only on $W$.

In the following proposition, recall that $\Lambda$ is the generator of scaling (defined in \eqref{Lambda-def}).  We also continue to use the notation $v=v_1+iv_2$ for complex-valued functions.


		\begin{prop}[Modulation I]\label{P:new_modulation} Let $u:I\times\mathbb R^3\to \mathbb C$ be a solution to \eqref{NLS} satisfying $E(u_0)=E(W)$.  Let 
		\begin{equation}\label{I_0}
		I_0=\{t\in I: \delta(u(t))<\delta_0\}.
		\end{equation}
		If $M>0$ is sufficiently large and $\delta_0$ is sufficiently small (both depending only on $W$), then there exist $\theta:I_0\to\R/{2\pi \mathbb Z}
		$, $\lambda: I_0 \to (0,+\infty)$ and $v:I_0\to \dot H^1$ such that
		\begin{equation}\label{mod-def-v}
u(t,x) = e^{-i\theta(t)}\lambda(t)^{-1/2}[W(\lambda(t)^{-1} x)+v(t,\lambda(t)^{-1} x)],
		\end{equation}    
where the parameters $\lambda$ and $\theta$ are chosen in order to impose the orthogonality conditions
\begin{equation}\label{Orth-v1v2}
\la v_1,  P_{\leq M}\Lambda W\rah = \la  v_2 , P_{\leq M}W\rah = 0
\end{equation}
for all $t\in I_0$. Moreover, we have
\begin{equation}\label{mod-bds-pt1}
\|v(t)\|_{\dot H^1} \sim_{\delta_0} \delta(u) \qtq{ and }
|\dot \theta| + \big|\tfrac{\dot \lambda}{\lambda}\big| \lesssim_{\delta_0} M^2\lambda^{-2}\delta(u)
\end{equation}	
uniformly for $t\in I_0$.
\end{prop}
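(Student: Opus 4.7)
The plan is to establish the three conclusions in order: existence of $(\theta(t),\lambda(t))$ via the implicit function theorem, the sharp equivalence $\|v\|_{\dot H^1}\sim\delta(u)$ via energy conservation combined with Proposition~\ref{P:orthogonality}, and the bound on $\dot\theta,\dot\lambda/\lambda$ by differentiating the orthogonality conditions in time.

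\textbf{Existence.} I would consider the map
\[
\Phi(\theta,\lambda,u) = \bigl(\la\tilde v_1, P_{\leq M}\Lambda W\rah,\, \la\tilde v_2, P_{\leq M}W\rah\bigr), \quad \tilde v(y) := e^{i\theta}\lambda^{1/2}u(\lambda y) - W(y).
\]
At the base point $(\theta,\lambda,u)=(0,1,W)$ one has $\tilde v=0$ and a direct computation gives
\[
D_{(\theta,\lambda)}\Phi\big|_{(0,1,W)} = \begin{bmatrix} 0 & \la\Lambda W,P_{\leq M}\Lambda W\rah\\ \la W,P_{\leq M}W\rah & 0\end{bmatrix},
\]
which is antidiagonal with strictly positive entries for $M$ large, since $P_{\leq M}g\to g$ in $\dot H^1$ as $M\to\infty$ for $g\in\{W,\Lambda W\}$. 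The implicit function theorem then produces smooth parameters $(\theta(u),\lambda(u))$ for $u$ in a small $\dot H^1$-neighborhood of the ground-state orbit, and the standard variational characterization of $W$ (using $E[u]=E[W]$ together with $\delta(u)<\delta_0$) ensures that $u(t)$ lies in such a neighborhood for every $t\in I_0$.

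\textbf{Size of $v$.} In the rescaled variables the identity
\[
\|u\|_{\dot H^1}^2 = \|W\|_{\dot H^1}^2 + 2\la v_1,W\rah + \|v\|_{\dot H^1}^2
\]
immediately gives $\delta(u)\lesssim\|v\|_{\dot H^1}$ and $|\la v_1,W\rah|\leq\tfrac12(\delta(u)+\|v\|_{\dot H^1}^2)$. For the reverse inequality, Taylor expanding the energy at the critical point $W$ combined with $E[u]=E[W]$ yields $Q(v)=O(\|v\|_{\dot H^1}^3)$. Since the orthogonality conditions and $\|P_{>M}g\|_{\dot H^1}=o_M(1)$ produce $|\la v_1,\Lambda W\rah|+|\la v_2,W\rah| = o_M(1)\|v\|_{\dot H^1}$, Proposition~\ref{P:orthogonality} applied separately to $v_1$ and $v_2$ gives
\[
c\|v\|_{\dot H^1}^2\leq Q(v) + C\la v_1, W\rah^2 + o_M(1)\|v\|_{\dot H^1}^2 \lesssim \|v\|_{\dot H^1}^3 + \delta(u)^2 + o_M(1)\|v\|_{\dot H^1}^2,
\]
so that $\|v\|_{\dot H^1}\lesssim\delta(u)$ follows after choosing $M$ large and $\delta_0$ small.

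\textbf{Modulation derivatives and the main obstacle.} Differentiating the decomposition \eqref{mod-def-v} in $t$ and substituting \eqref{NLS} produces the rescaled modulation equation
\[
\partial_t v = i\dot\theta(W+v) + \tfrac{\dot\lambda}{\lambda}\Lambda(W+v) + \lambda^{-2}(L_-v_2 - iL_+v_1) + \lambda^{-2}R(v).
\]
Taking $\partial_t$ of the two orthogonality conditions and substituting this formula yields a $2\times 2$ linear system for $(\dot\theta,\dot\lambda/\lambda)$ whose coefficient matrix equals the antidiagonal Jacobian from the first step plus an $O(\|v\|_{\dot H^1})$ perturbation (hence invertible by Step~2 and the smallness of $\delta_0$), and whose right-hand side consists of pairings of the form $\lambda^{-2}\la L_\pm v_?, P_{\leq M}g\rah$ and $\lambda^{-2}\la R(v), P_{\leq M}g\rah$ with $g\in\{W,\Lambda W\}$. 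Integration by parts writes each pairing as a sum of terms tested against $(-\Delta)P_{\leq M}g$ (via the $-\Delta$ in $L_\pm$) and $|x|^{-1}W^2 P_{\leq M}g$ (via the potential term), and the Bernstein-type bound $\|(-\Delta)P_{\leq M}g\|_{\dot H^1}\lesssim M^2\|g\|_{\dot H^1}$ together with H\"older (using $|x|^{-1}W^2\in L^{3/2}$) controls each by $M^2\|v\|_{\dot H^1}$. Inverting the system and applying the previous step then gives $|\dot\theta|+|\dot\lambda/\lambda|\lesssim M^2\lambda^{-2}\delta(u)$. The main obstacle is precisely this pairing estimate: were one to use the unregularized test functions $W,\Lambda W$, the quantities $L_\pm W$ and $L_\pm\Lambda W$ would inherit the full singularity of $|x|^{-1}W^2$ at the origin and fail to be in any workable function space. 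The low-frequency cut-off $P_{\leq M}$ regularizes everything uniformly, at the price of the factor $M^2$ appearing explicitly in the estimate for $\dot\theta$ and $\dot\lambda/\lambda$.
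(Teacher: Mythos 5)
Your proposal is correct and follows essentially the same route as the paper: an implicit-function-theorem construction of $(\theta,\lambda)$ adapted to the frequency-localized test functions (your antidiagonal Jacobian at $(0,1,W)$ versus the paper's diagonal one at $(0,1,P_{\leq M}W)$ with the projection $\Pi$ is a cosmetic difference), the expansion $Q(v)=O(\|v\|_{\dot H^1}^3)$ from energy conservation combined with Proposition~\ref{P:orthogonality} and the $o_M(1)$ errors from $P_{>M}$ to obtain $\|v\|_{\dot H^1}\sim\delta(u)$, and differentiation of the orthogonality conditions together with $\|\Delta P_{\leq M}g\|_{\dot H^1}\lesssim M^2\|g\|_{\dot H^1}$ and the $L^{3/2}$ bound on $|x|^{-1}W^2$ for the derivative estimates. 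The only step you elide is the $C^1$-in-time regularity of $\theta$ and $\lambda$: since $t\mapsto u(t)$ is merely continuous into $\dot H^1$ (and differentiable only into $\dot H^{-1}$), you cannot simply compose the IFT-produced maps with $u(t)$ and apply the chain rule; the paper resolves this by a second application of the implicit function theorem to $F(t,\omega,\mu)=J(\omega,\mu,u(t))$, which is $C^1$ precisely because $\partial_t u\in C^0_t\dot H^{-1}_x$ pairs against the fixed $\dot H^1$ test functions.
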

	\begin{proof}
Our approach builds on the strategies appearing in \cite[Lemma 3.7]{DM_Thre} and \cite[Lemma 4.4]{Liu2024arxiv}).  

We let $u$ be a solution to \eqref{NLS} satisfying $E[u_0] = E[W]$. The existence of functions $\theta_0: I_0 \to\R/{2\pi \mathbb Z} $ and $\lambda_0: I_0 \to (0,+\infty)$ such that 
\begin{equation}
\|u(t,x) - e^{-i\theta_0(t)}\lambda_0^{-1/2}(t)W(\lambda_0^{-1}(t)x)\|_{\dot{H}^1_x}= o_{\delta_0}(1)
\end{equation}
follows from the variational characterization of $W$.
		
Let $M>0$ to be determined below.  We will now apply the Implicit Function Theorem to the functional
$$
J(\theta, \lambda, h) := \left(\la  e^{i\theta}\lambda^{1/2}h(\lambda\; \cdot),iP_{\leq M}W\ra_{\dot H^1},
\la e^{i\theta}\lambda^{1/2}h(\lambda\; \cdot),\Pi (P_{\leq M}\Lambda W)\ra_{\dot H^1}\right),
$$ 
where 
\[
\Pi(f):=f- \frac{\la P_{\leq M}W ,f \ra_{\dot H^1}}{\|P_{\leq M}W\|^2_{\dot H^1}} P_{\leq M}W
\]
is the projection to the orthogonal complement of $P_{\leq M} W$.
		
		
To this end, we note that $J(0,1,P_{\leq M}W)=(0,0)$ and that
\begin{equation}
		\frac{\partial J}{\partial(\theta, \lambda)}(0,1,P_{\leq M}W) =\begin{bmatrix}{}
			\|P_{\leq M}W\|_{\dot H^1}^2 & 0\\
			0& \la \Lambda(P_{\leq M}W), \Pi(P_{\leq M}\Lambda W)\ra_{\dot H^1} \\
		\end{bmatrix}.
	\end{equation}	
By definition,
$$
\Pi(P_{\leq M}\Lambda W)=P_{\leq M}\Lambda W- \frac{\la P_{\leq M}W ,P_{\leq M}\Lambda W \ra_{\dot H^1}}{\|P_{\leq M}W\|^2_{\dot H^1}} P_{\leq M}(W)
$$
Noting that $P_{\leq M} f= f+o_M(1)$ in $\dot H^1$ as $M\to\infty$ for any $f\in \dot H^1$ and recalling $W\perp\Lambda W$, we derive that 
$$
\Pi(P_{\leq M}\Lambda W)= \Lambda W + o_M(1) \qtq{in} \dot H^1\qtq{as}\quad M\to \infty.
$$
On the other hand, we write
$$
\Lambda(P_{\leq M}W)=P_{\leq M}\Lambda W+[\Lambda; P_{\leq M}] W,
$$
where the commutator can be written on the Fourier side as
$$
\mathcal F\left({[\Lambda; P_{\leq M}]W}\right)(\xi)=m(\tfrac{\xi}{M})\widehat{W}(\xi),
$$
where $m(\eta):=\eta\cdot\nabla\phi(\eta)\in C_c^{\infty}$ is supported in the annulus $1\leq |\eta|\leq 2$. Therefore
$$
\|[\Lambda; P_{\leq M}] W\|_{\dot H^1}^2\lesssim \int_{M\leq |\eta|\leq 2M} |\xi|^2|\widehat{W}(\xi)|^2d\xi\leq \|P_{> M/2}W\|_{\dot H^1}^2=o_M(1), \quad \text{as}\quad M\to \infty.
$$
Thus, choosing $M= M(W)$ sufficiently large, we may ensure that 
\begin{equation}\label{detnotzero}
\left|\frac{\partial J}{\partial(\theta, \lambda)}(0,1,P_{\leq M}W)\right| \gtrsim_W 1
\end{equation}
and consequently
\begin{equation}
		\frac{\partial J}{\partial(\theta, \lambda)}(0,1,P_{\leq M}(W)) \in GL(2,\R).
	\end{equation}

Therefore, by the Implicit Function Theorem, there exists a ball $B = B(P_{\leq M}W,\epsilon_1)$ in $\dot H^1$, with $0<\epsilon_1\ll_W 1$ (independent of $M$), and functions $\theta_1: B \to \R/{2\pi \mathbb Z}$, $\lambda_1: B \to (0,\infty)$ satisfying \[
|\theta_1(h)| + |\lambda_1(h)-1| \ll_{W} 1\qtq{and}J(\theta_1(h),\lambda_1(h),h) = 0 \qtq{for all}h\in B.
\]

We now claim that $e^{i\theta_0(t)}\lambda_0^{1/2}(t)u(t,\lambda_0(t)\cdot) \in B$ for all $t \in I_0$ provided $\delta_0\ll_W 1$ and $M\gg_W 1$. Indeed, we have
\begin{align}
\|e^{i\theta_0(t)}\lambda_0^{1/2}(t)u(t,\lambda_0(t)\cdot) - P_{\leq M}W\|_{\dot H^1} &\leq \|e^{i\theta_0(t)}\lambda_0^{1/2}(t)u(t,\lambda_0(t)\cdot) - W\|_{\dot H^1} + \|W - P_{\leq M}W\|_{\dot H^1}\\
&= o_{\delta_0}(1) + \|P_{>M} W\|_{\dot H^1}< \epsilon_1
\end{align}
		if $\delta_0 \ll_W 1$ and $M \gg_W 1$.  This allows us to define
		\begin{equation}
		\theta(t) = \theta_0(t) +  \theta_1(e^{i\theta_0(t)}\lambda_0^{1/2}(t)u(t,\lambda_0(t)\cdot))
		\end{equation}
		and
		\begin{equation}
			\lambda(t) = \lambda_0(t) \lambda_1(e^{i\theta_0(t)}\lambda_0^{1/2}(t)u(t,\lambda_0(t)\cdot)), 
		\end{equation}
such that $J(\theta(t),\lambda(t),u(t)) = 0$ for all $t \in I_0$.  

It remains to verify the differentiability of $\theta$ and $\lambda$. To this end, let us introduce 
\begin{equation}
F(t,\omega,\mu) = \left(\la  e^{i\omega}\mu^{1/2}u(t,\mu\; \cdot),iP_{\leq M}W\ra_{\dot H^1},\la e^{i\omega}\mu^{1/2}u(t,\mu\; \cdot),\Pi (P_{\leq M}\Lambda W)\ra_{\dot H^1}\right).
\end{equation}
Let $t_0 \in I_0$ and write 
\[
v_0 =e^{i\theta(t_0)}\lambda(t_0)^{1/2}u(t,\lambda(t_0)\; \cdot)-P_{\leq M}W,
\]
so that $\|v_0\|_{\dot H^1}<\epsilon_1\ll_W 1$.  We then compute 

	\begin{align}
			\frac{\partial F}{\partial(\omega, \mu)} (t_0,\theta(t_0),\lambda(t_0))
			&=\begin{bmatrix}{}
				\|P_{\leq M}W\|_{\dot H^1}^2 & 0\\
			0& \lambda^{-1}(t_0)\la  \Lambda(P_{\leq M}W), \Pi(P_{\leq M}\Lambda W)\ra_{\dot H^1} \\
			\end{bmatrix}\\
			&\label{projection_terms_to_be_bounded}\quad\quad+\begin{bmatrix}{}
				\la iv_0,iP_{\leq M}W\ra_{\dot H^1}&  -\lambda^{-1}(t_0)\la v_0,i\Lambda P_{\leq M}W\ra_{\dot H^1}\\
				\la iv_0,\Pi (P_{\leq M}\Lambda W)\ra_{\dot H^1}& -\lambda^{-1}(t_0)\la v_0,\Lambda\Pi (P_{\leq M}\Lambda W)\ra_{\dot H^1}
			\end{bmatrix}.
		\end{align}
Thus, arguing as in the proof of \eqref{detnotzero}, if $M$ is large enough (depending only on $W$), we have 
		\begin{equation}
			\left|\frac{\partial F}{\partial(\theta, \lambda)} (t_0,\theta(t_0),\lambda(t_0))\right| \gtrsim_W \lambda^{-1}(t_0)>0.
		\end{equation}
		We stress that all quantities in \eqref{projection_terms_to_be_bounded} are finite and uniformly bounded in $M$, as
		$W$, $\Lambda W$ and $\Lambda^2 W$ belong to $\dot H^1$ and $\|[P_{\leq M};\Lambda]\|_{\dot H^1 \to \dot H^1} \lesssim 1$.

Moreover, as $\partial_t u \in C^0_t \dot H^{-1}_x$ and $P_{\leq M} W$, $\Pi P_{\leq M}\Lambda W \in \dot H^1$, we have that $F$ is $C^1$. Thus, another application of the Implicit Function Theorem implies that there exist $C^1$ functions $(\tilde \theta_{t_0},\tilde \lambda_{t_0})$, defined on a neighborhood $U_{t_0}$ of $t_0$, such that
\begin{equation}
F(t,\omega,\mu)=0 \qtq{if and only if} (\omega,\mu)=(\tilde\theta_{t_0}(t),\tilde\lambda_{t_0}(t))
\end{equation}
for all $t\in U_{t_0}$.  However, by construction we have 
\[
F(t,\theta(t),\lambda(t)) \equiv J(\theta(t),\lambda(t),u(t))=0,
\]
so  that $\theta \equiv \tilde \theta_{t_0}$ and $\lambda \equiv \tilde \lambda_{t_0}$ on $U_{t_0}$. In particular, $\theta,\lambda$ are $C^1$ on a neighborhood of $t_0$.  As $t_0\in I_0$ was arbitrary, we obtain $\theta,\lambda\in C^1$. 

If we now define
\begin{equation}
v(t,x) =e^{i\theta(t)}\lambda(t)^{1/2}u(t,\lambda(t)\, x) - P_{\leq M} W(x),
\end{equation}
then the orthogonality conditions \eqref{Orth-v1v2} hold by construction.
		
We now turn to the bounds on the derivatives of the modulation parameters and on $v$. We write
		\begin{equation}\label{Decomp-v1}
			v=\beta W+\widetilde{v}, \quad \mbox{with} \quad \beta:= \frac{\la  v_1, W\ra_{\dot H^{1}}}{\|W\|_{\dot H^1}^2} \in \R,
		\end{equation}	
so as to impose the orthogonality condition  $\la \widetilde{v}_1\,, W\ra_{\dot H^1}=0$.  In particular, we have
		\begin{equation}\label{Ort-vtildv}
			\|v\|^2_{\dot{H}^1}= \beta^2\|W\|^2_{\dot{H}^1}+\|\widetilde{v}\|^2_{\dot{H}^1}
		\end{equation}
		and $B(W, \widetilde{v})$=0 (cf. \eqref{B-def} above). Therefore,
		\begin{equation}\label{Q-tildev}
			Q(v)=Q(\widetilde{v})+\beta^2Q(W)
		\end{equation}	
(cf. \eqref{Q-def}). Moreover, from the variational characterization of $W$ and the scaling invariance of the energy we also have
		$$
		E[W]=E[u]=E[W+v]=E[W]+Q(v)+O(\|v\|^3_{\dot{H}^1}),
		$$
		which implies
		\begin{equation}\label{Q-cubicv}
			Q(v)=O(\|v\|^3_{\dot{H}^1}).
		\end{equation}	
		Since $Q(W)<0$ by direct computation, we may use Proposition \ref{P:orthogonality} and the relations \eqref{Q-tildev}--\eqref{Q-cubicv}, to deduce
		$$
		\|\widetilde{v}\|^2_{\dot{H}^1}\lesssim Q(\widetilde{v})=Q(v)+\beta^2|Q(W)|\lesssim \beta^2+\|v\|^3_{\dot{H}^1}
		$$
		and
		$$
		\beta^2\lesssim Q(\widetilde{v})+Q(v)\lesssim \|\widetilde{v}\|^2_{\dot{H}^1}+\|v\|^3_{\dot{H}^1}.
		$$
As $v$ is small in $\dot{H}^1$, the relation \eqref{Ort-vtildv} implies that $|\beta| \sim \|\widetilde{v}\|_{\dot H^1} \sim \|v\|_{\dot H^1}\ll 1$. To relate these quantities to $\delta(u)$, we use the definition of $\beta$ to write
		$$
		\delta(u)=\left|\|W+v\|_{\dot{H}^1}^2 - \|W\|_{\dot{H}^1}^2 \right|=\left|\| v\|_{\dot{H}^1}^2 +2\beta \| W\|_{\dot{H}^1}^2 \right|,
		$$
so that $|\beta|\sim \delta(u)$.

		One can now directly verify that $v$ satisfies the equation
		\begin{equation}
			i\partial_t v + \lambda^{-2}\Delta v = -\dot \theta [W+v] + i \tfrac{\dot \lambda}{\lambda} \Lambda[W+v] - \lambda^{-2}|x|^{-1}[|W+v|^2(W+v) - |W|^2W],
		\end{equation}
		or equivalently
		\begin{align}\label{eq:system_v1_v2}
			\begin{cases}
				\dot v_1 &= \lambda^{-2}L_-v_2- \dot \theta v_2 + \frac{\dot \lambda}{\lambda}\Lambda[W+v_1] + \lambda^{-2} \Im R(v),\\
				\dot v_2 &= - \lambda^{-2}L_+ v_1 + \dot \theta [W+v_1] +\frac{\dot \lambda}{\lambda}\Lambda v_2 - \lambda^{-2}\Re R(v),
			\end{cases}
		\end{align}
		where $R(v) := -|x|^{-1}[2|v|^2W + v^2 W +|v|^2 v]$.
		
Testing the system above against 
\[
0+iP_{\leq M}W = \begin{bmatrix*}[c]0\\P_{\leq M} W\end{bmatrix*}\qtq{and} P_{\leq M}\Lambda W + 0i = \begin{bmatrix*}[c]P_{\leq M}\Lambda W\\0\end{bmatrix*}
\]
in $\dot H^1$ and integrating by parts, we deduce the following linear system for the time derivatives of the modulation parameters $\dot{\theta}$ and $\dot{\lambda}/\lambda$:
\[
\mathcal{A}\begin{bmatrix}
			\dot{\theta} \\[4pt]
			\frac{\dot{\lambda}}{\lambda}
		\end{bmatrix}
		= \lambda^{-2}
		\begin{bmatrix}
			F_1 \\[4pt]
			F_2
		\end{bmatrix},
\]
where
\[ \mathcal{A}:=
		\begin{bmatrix}
			\la W, P_{\leq M} W\ra_{\dot H^1}+\la  v_1, P_{\leq M}W\ra_{\dot H^1} &-\la   v_2,  \Lambda P_{\leq M}W\rah \\[4pt]
			- \la v_2,P_{\leq M}\Lambda W\rah& \la \Lambda W, P_{\leq M }\Lambda W\rah-\la  v_1,\Lambda P_{\leq M}\Lambda W\rah
		\end{bmatrix}
		\]
and (recalling \eqref{B-def}) we have
\begin{align*}
&F_1 = B(v_1,\Delta  P_{\leq M}W)+ \la \Re R(v),\, \Delta P_{\leq M}W \ra_{L^2},\\
&F_2 = -B(iv_2, i\Delta P_{\leq M} \Lambda W) - \la \Im R(v),\, \Delta P_{\leq M} \Lambda W \ra_{L^2}.
\end{align*}
		
As $M \gg_W 1$ and $\delta_0 \ll_W 1$, we have that
\[
|\det\mathcal{A}| \gtrsim \|W\|_{\dot{H}^1}^2\|\Lambda W\|_{\dot{H}^1}^2>0
\]
uniformly for $t\in I_0$. Moreover, we have
		$$
		|B(v_1,P_{\leq M}\Delta W)|+ |B(iv_2,i\Delta P_{\leq M}\Lambda W) |\lesssim M^2\| v\|_{\dot{H}^1}.
		$$
		
		For the nonlinear terms we use $|R(v)|\lesssim |x|^{-1}(W|v|^2+|v|^3)$ to deduce from \eqref{weighted-sobolev} that
		\begin{align}
|\la R(v), \Delta P_{\leq M}  W \ra_{L^2}| &\lesssim \|\Delta P_{\leq M}  W\|_{\dot H^1}(\|v\|_{\dot H^1}^2\|W\|_{\dot H^1}+\|v\|_{\dot H^1}^3)\\
&\lesssim M^2 (\|v\|_{\dot H^1}^2 + \|v\|_{\dot H^1}^3	),
\end{align}
		and similarly
	$$
		|\la R(v), \Delta P_{\leq M} \Lambda W \ra_{L^2}|\lesssim M^2 (\|v\|_{\dot H^1}^2 + \|v\|_{\dot H^1}^3	).
		$$	
		Solving the linear system and using $\|v\|_{\dot H^1} \sim \delta(u)$, we finally obtain		
		\begin{equation}
			|\dot \theta(t)| \lesssim M^2\lambda^{-2}(t)\delta(u(t)) 
		\qtq{and}
			|\dot \lambda(t)| \lesssim M^2\lambda^{-1}(t)\delta(u(t)),
		\end{equation}
		for all $t \in I_0$. This completes the proof.\end{proof}

Note that in Proposition~\ref{P:new_modulation}, we did \emph{not} obtain any control over the term $|\dot\beta|$, where $\beta$ is the coefficient that was introduced in \eqref{Decomp-v1}.  In fact, this is precisely the quantity that one cannot estimate satisfactorily when the parameter $b$ in the nonlinearity is too large (as larger $b$ corresponds to a stronger singularity of the ground state at the origin).  In the prior related work \cite{Liu2024arxiv}, the authors estimated this quantity by taking an inner product (in $\dot{H}^1$) between the first equation in \eqref{eq:system_v1_v2} and $W$ in order to obtain the identity
$$
\dot{\beta}\|W\|_{\dot{H}^1}^2=-\lambda^{-2}(\la v_2,\, L_- (\Delta W) \ra_{L^2}+\Im \la R(v),\, \Delta W \ra_{L^2}).
$$
In order to estimate the final term on the right-hand side, the authors ultimately had to impose $b\in (0,\tfrac12)$ (thereby limiting the singularity of the ground state at the origin).  We refer the reader to \cite[Lemma 4.4, Claim B.2]{Liu2024arxiv} for further details. 



In order to deal with the singular nonlinearity $|x|^{-1}|u|^2 u$, we modify the orthogonality conditions to \eqref{Orth-v1v2} and introduce the corresponding decomposition (with $v=v(t)$ as in \eqref{mod-def-v}):
\begin{equation}\label{Decomp-v2}
v = \alpha P_{\leq M} W + g, \quad \mbox{with}\quad  \alpha = \frac{\la  v_1,P_{\leq M} W\rah}{\|P_{\leq M}W\|_{\dot H^1}^2}\in\R.
\end{equation}
This yields the orthogonality condition 
\[
\la  g_1, P_{\leq M}W\rah = 0.
\]

We now demonstrate that for $M=M(W)$ sufficiently large, we can obtain coercivity for the expression $\langle Lg,g\rangle$ uniformly over the modulation interval $I_0$. 

\begin{lemma}\label{L:Coercive-LM} We have \begin{equation}\label{Coercive-L}
\la Lg, g\raa \gtrsim \|g\|_{\dot H^1}^2 - o_M(1)\,|\alpha|^2
\end{equation}
as $M\to\infty$, uniformly for $t\in I_0$. 
\end{lemma}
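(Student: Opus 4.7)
The plan is to reduce the coercivity estimate for $\langle Lg,g\rangle$ to the abstract coercivity from Proposition~\ref{P:orthogonality}, by showing that the orthogonality conditions satisfied by $g_1,g_2$ (which are with respect to the \emph{smoothed} ground state and its scaling derivative) translate, up to $o_M(1)$-errors, into the orthogonality conditions with respect to $W$ and $\Lambda W$ required by Proposition~\ref{P:orthogonality}.

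First, I would collect the orthogonality relations available for $g=g_1+ig_2$. Since $P_{\leq M}W$ is real, the decomposition \eqref{Decomp-v2} gives $v_1=\alpha P_{\leq M}W+g_1$ and $v_2=g_2$, so the construction of $\alpha$ yields $\langle g_1,P_{\leq M}W\rangle_{\dot H^1}=0$, while the modulation orthogonalities \eqref{Orth-v1v2} give $\langle g_2,P_{\leq M}W\rangle_{\dot H^1}=0$ and
\[
\langle g_1,P_{\leq M}\Lambda W\rangle_{\dot H^1}=-\alpha\,\langle P_{\leq M}W,P_{\leq M}\Lambda W\rangle_{\dot H^1}.
\]

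Next, I would convert these into approximate orthogonality with respect to $W$ and $\Lambda W$. Writing $W=P_{\leq M}W+P_{>M}W$ (and analogously for $\Lambda W$), and using the self-adjointness of $P_{\leq M}$ on $\dot H^1$ together with the fact that $\|P_{>M}f\|_{\dot H^1}=o_M(1)$ for any fixed $f\in\dot H^1$, I get
\[
|\langle g_j,W\rangle_{\dot H^1}|\leq\|P_{>M}W\|_{\dot H^1}\|g_j\|_{\dot H^1}=o_M(1)\|g\|_{\dot H^1}\quad(j=1,2).
\]
For the $\Lambda W$ direction, the crucial cancellation comes from $\langle W,\Lambda W\rangle_{\dot H^1}=0$: expanding $P_{\leq M}$ on both sides gives $\langle P_{\leq M}W,P_{\leq M}\Lambda W\rangle_{\dot H^1}=o_M(1)$. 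Consequently
\[
|\langle g_1,\Lambda W\rangle_{\dot H^1}|\leq|\langle g_1,P_{\leq M}\Lambda W\rangle_{\dot H^1}|+|\langle g_1,P_{>M}\Lambda W\rangle_{\dot H^1}|=o_M(1)|\alpha|+o_M(1)\|g\|_{\dot H^1}.
\]

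Finally, I would apply Proposition~\ref{P:orthogonality} to $g_1$ and $g_2$ separately and sum to obtain
\[
\langle Lg,g\rangle_{L^2}\geq c\|g\|_{\dot H^1}^2-O\!\left(o_M(1)\|g\|_{\dot H^1}^2+o_M(1)|\alpha|^2\right),
\]
where I also used $(a+b)^2\lesssim a^2+b^2$ to split the squared inner product with $\Lambda W$. Choosing $M=M(W)$ sufficiently large absorbs the $o_M(1)\|g\|_{\dot H^1}^2$ term into the main coercive piece, yielding the claim uniformly in $t\in I_0$. I do not anticipate a serious obstacle here: the only point requiring care is the cross term $\langle P_{\leq M}W,P_{\leq M}\Lambda W\rangle_{\dot H^1}$, which at first looks like it could produce an $O(|\alpha|)$ (not $o_M(1)|\alpha|$) loss, but the exact orthogonality $W\perp\Lambda W$ in $\dot H^1$ reduces it to a product of two $P_{>M}$-tails, which are small as $M\to\infty$.
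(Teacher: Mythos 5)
Your proposal is correct and follows essentially the same route as the paper: you convert the smoothed orthogonality conditions on $g_1,g_2$ into approximate ($o_M(1)$) orthogonality against $W$ and $\Lambda W$, exploit the exact relation $\la W,\Lambda W\ra_{\dot H^1}=0$ to make the cross term $\la P_{\leq M}W,P_{\leq M}\Lambda W\ra_{\dot H^1}$ small, and then invoke Proposition~\ref{P:orthogonality}, absorbing the $o_M(1)\|g\|_{\dot H^1}^2$ error for $M$ large. No gaps.
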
 

\begin{proof} We will use Proposition~\ref{P:orthogonality}. Using the orthogonality conditions, we first compute
\[
|\langle g_1, W\rangle_{\dot H^1}| = |\langle g_1, P_{>M} W\rangle_{\dot H^1}| \lesssim o_M(1)\,\|g\|_{\dot H^1}.
\]
Next, as $g_2=v_2$, 
\[
|\langle g_2,W\rangle_{\dot H^1}| = |\langle g_2,P_{>M}W\rangle_{\dot H^1}| \lesssim o_M(1)\,\|g\|_{\dot H^1}. 
\]
Finally, we use the definition of $g$ and the orthogonality conditions to observe that
\begin{align*}
\langle g_1,\Lambda W\rangle_{\dot H^1} & = \langle v_1,P_{>M}\Lambda W\rangle_{\dot H^1} - \alpha \langle P_{\leq M}W,\Lambda W\rangle_{\dot H^1} \\
& = \langle g_1,P_{>M}\Lambda W\rangle_{\dot H^1} - \alpha\langle P_{\leq M}W,P_{\leq M}\Lambda W\rangle_{\dot H^1}. 
\end{align*}
Recalling that $W\perp\Lambda W$, we obtain
\[
|\langle g_1,\Lambda W\rangle_{\dot H^1}| \lesssim o_M(1)\,\bigl[\|g\|_{\dot H^1}+|\alpha|\bigr]. 
\]
Proposition~\ref{P:orthogonality} therefore implies that
\[
\langle Lg,g\rangle_{L^2} \gtrsim \|g\|_{\dot H^1}^2 - o_M(1)\,\|g\|_{\dot H^1}^2 - o_M(1)\,|\alpha|^2,
\]
which yields the result. \end{proof}

%
%

In the next proposition we utilize the previous lemma and obtain control over the parameters $\|g\|_{\dot H^1}$, $|\alpha|$, and $|\dot{\alpha}|$ in the decomposition \eqref{Decomp-v2}.

\begin{prop}[Modulation II
]\label{P:new_modulation_alpha}
 For $M = M(W)\gg 1$, 
    \begin{equation}
        \|g\|_{\dot H^1} \sim_{\delta_0} |\alpha| \sim_{\delta_0} \delta, \qtq{ and } |\dot \alpha | \lesssim_{\delta_0} M^2\lambda^{-2} \delta
    \end{equation}
    uniformly for $t\in I_0$.
\end{prop}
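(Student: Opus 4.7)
The plan is to first establish the equivalences $\|g\|_{\dot H^1}\sim |\alpha|\sim \delta$ and then to differentiate the defining relation for $\alpha$, estimating the resulting expression term-by-term using the bounds obtained in Proposition~\ref{P:new_modulation}. Since the decomposition \eqref{Decomp-v2} is built around $P_{\leq M}W$ rather than $W$ itself, every orthogonality relation enjoyed by $W$ survives only up to a $o_M(1)$ penalty as $M\to\infty$, and Lemma~\ref{L:Coercive-LM} is designed precisely to absorb such errors.

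\textbf{Equivalences.} The starting point is the identity $Q(v)=O(\|v\|_{\dot H^1}^3)$ coming from $E[u]=E[W]$, exactly as in the proof of Proposition~\ref{P:new_modulation}. Expanding via \eqref{Decomp-v2} gives
\[
Q(v)=\alpha^2\, Q(P_{\leq M}W)+2\alpha\, B(P_{\leq M}W,g)+Q(g).
\]
Three facts drive the argument: (a) $Q(P_{\leq M}W)=Q(W)+o_M(1)=-\|W\|_{\dot H^1}^2+o_M(1)$, by continuity of $Q$ in $\dot H^1$; (b) $|B(P_{\leq M}W,g)|\lesssim o_M(1)\|g\|_{\dot H^1}$, obtained by writing $P_{\leq M}W=W-P_{>M}W$, using the identity $|x|^{-1}W^3=-\Delta W$ (a consequence of $L_+W=2\Delta W$) to rewrite $\langle L_+W,g_1\rangle_{L^2}=2\langle W,g_1\rangle_{\dot H^1}$, and exploiting $\langle g_1,W\rangle_{\dot H^1}=-\langle g_1,P_{>M}W\rangle_{\dot H^1}=o_M(1)\|g\|_{\dot H^1}$ (which in turn uses the orthogonality condition $\langle g_1,P_{\leq M}W\rangle_{\dot H^1}=0$); and (c) $Q(g)\gtrsim \|g\|_{\dot H^1}^2-o_M(1)|\alpha|^2$ from Lemma~\ref{L:Coercive-LM}. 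Combining (a)--(c), applying Cauchy--Schwarz on the cross term, and absorbing cubic corrections into quadratic ones (since $\|v\|_{\dot H^1}\ll 1$) yields $\|g\|_{\dot H^1}\sim_{\delta_0}|\alpha|$ for $M$ large and $\delta_0$ small. For the link with $\delta$, I expand
\[
\delta=\bigl|2\langle v_1,W\rangle_{\dot H^1}+\|v\|_{\dot H^1}^2\bigr|,
\]
compute $\langle v_1,W\rangle_{\dot H^1}=\alpha\|P_{\leq M}W\|_{\dot H^1}^2+\langle g_1,W\rangle_{\dot H^1}$, and use $\|P_{\leq M}W\|_{\dot H^1}^2\to \|W\|_{\dot H^1}^2$ together with $\langle g_1,W\rangle_{\dot H^1}=o_M(1)\|g\|_{\dot H^1}$ to conclude $\delta\sim_{\delta_0}|\alpha|$.

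\textbf{Time derivative.} Differentiating $\alpha\|P_{\leq M}W\|_{\dot H^1}^2=\langle v_1,P_{\leq M}W\rangle_{\dot H^1}$ in time and inserting the first equation of \eqref{eq:system_v1_v2} yields
\begin{align*}
\dot\alpha\,\|P_{\leq M}W\|_{\dot H^1}^2 &= \lambda^{-2}\langle L_-v_2,P_{\leq M}W\rangle_{\dot H^1}-\dot\theta\,\langle v_2,P_{\leq M}W\rangle_{\dot H^1}\\
&\quad+\tfrac{\dot\lambda}{\lambda}\langle\Lambda[W+v_1],P_{\leq M}W\rangle_{\dot H^1}+\lambda^{-2}\langle \Im R(v),P_{\leq M}W\rangle_{\dot H^1}.
\end{align*}
Each pairing is interpreted as $\langle\cdot,-\Delta P_{\leq M}W\rangle_{L^2}$, which is well-defined because $P_{\leq M}W$ is smooth and $\|\Delta P_{\leq M}W\|_{\dot H^1}\lesssim M^2$. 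I then estimate term by term: the linear $L_-v_2$ contribution is $\lesssim \lambda^{-2}M^2\|v\|_{\dot H^1}$; the $\dot\theta v_2$ contribution is subleading since both factors are small (using $|\dot\theta|\lesssim M^2\lambda^{-2}\delta$); the $\tfrac{\dot\lambda}{\lambda}\Lambda v_1$ piece is subleading by Cauchy--Schwarz and the boundedness of $\|\Lambda P_{\leq M}W\|_{\dot H^1}$ uniformly in $M$ (via $\Lambda P_{\leq M}W=P_{\leq M}\Lambda W+[\Lambda,P_{\leq M}]W$, with both terms bounded in $\dot H^1$); the $\tfrac{\dot\lambda}{\lambda}\Lambda W$ piece benefits from $\langle\Lambda W,P_{\leq M}W\rangle_{\dot H^1}=-\langle\Lambda W,P_{>M}W\rangle_{\dot H^1}=o_M(1)$, which uses $W\perp\Lambda W$; and the nonlinear $R(v)$ contribution admits the same treatment as in Proposition~\ref{P:new_modulation}, giving $\lesssim M^2(\|v\|_{\dot H^1}^2+\|v\|_{\dot H^1}^3)$. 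Dividing by $\|P_{\leq M}W\|_{\dot H^1}^2\sim 1$ produces $|\dot\alpha|\lesssim_{\delta_0} M^2\lambda^{-2}\delta$.

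\textbf{Main obstacle.} The bulk of the genuinely new work sits in the equivalences: because $\alpha$ is defined relative to $P_{\leq M}W$, every identity available for $W$ comes with a residual $o_M(1)$ error, and the non-triviality is that these errors must be absorbed by the main positive terms without rendering the argument circular. The proof therefore hinges on Lemma~\ref{L:Coercive-LM}, whose statement is tuned so that the $o_M(1)|\alpha|^2$ penalty in item (c) is dominated by the leading $\|g\|_{\dot H^1}^2$ term once $M=M(W)$ is taken large enough. The $\dot\alpha$ estimate itself is then routine, the only noteworthy point being that the orthogonality $W\perp\Lambda W$ (which survives frequency projection up to a small error) is exactly what is needed to keep the $\tfrac{\dot\lambda}{\lambda}\Lambda W$ contribution under control.
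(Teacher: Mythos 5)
Your proposal is correct and follows essentially the same route as the paper: the same expansion $Q(v)=Q(g)+2\alpha B(g,P_{\leq M}W)+\alpha^2Q(P_{\leq M}W)$ combined with $Q(v)=O(\|v\|_{\dot H^1}^3)$ and Lemma~\ref{L:Coercive-LM} for the equivalences, the same expansion of $\delta(u)$, and the same testing of the first equation of \eqref{eq:system_v1_v2} against $P_{\leq M}W$ in $\dot H^1$ for $|\dot\alpha|$. The only differences are cosmetic refinements (e.g.\ you upgrade the cross term to $o_M(1)|\alpha|\,\|g\|_{\dot H^1}$ and invoke $\langle \Lambda W,P_{\leq M}W\rangle_{\dot H^1}=o_M(1)$, neither of which is actually needed since the paper absorbs the cross term by Cauchy--Schwarz and controls the $\tfrac{\dot\lambda}{\lambda}\Lambda W$ term directly via the bound $|\tfrac{\dot\lambda}{\lambda}|\lesssim M^2\lambda^{-2}\delta$ from Proposition~\ref{P:new_modulation}).
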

\begin{proof}
From Proposition \ref{P:new_modulation} we already have that $\|v\|_{\dot H^1} \lesssim \delta(u)$ and the orthogonality in the decomposition \eqref{Decomp-v2} gives 
	\begin{equation}\label{eq:decomposition_v}
		\|g\|^2_{\dot H^1} +  \alpha^2 \|P_{\leq M}W\|^2_{\dot H^1} = \|v\|^2_{\dot H^1} \lesssim \delta^2(u).
	\end{equation}
We now expand
	\begin{equation}\label{eq:Qgalpha}
		Q(v) = Q(g) + 2\alpha B(g,P_{\leq M}W) + \alpha^2 Q(P_{\leq M}W).
	\end{equation}
Now note that $| Q(P_{\leq M}W)| \gtrsim1$ uniformly for large $M$. Therefore, the estimates \eqref{Q-cubicv} and \eqref{Coercive-L} together with equations \eqref{eq:decomposition_v} and \eqref{eq:Qgalpha} imply, 
	\begin{align}
		\|g\|_{\dot H^1}^2 &\lesssim \alpha^2 + |\alpha| \|g\|_{\dot H^1} + \alpha^3 + \|g\|_{\dot H^1}^3
	\end{align}
	and
	\begin{align}
		\alpha^2 &\lesssim \|g\|_{\dot H^1}^2 + |\alpha| \|g\|_{\dot H^1} + \alpha^3 + \|g\|_{\dot H^1}^3.
	\end{align}

Since $0<\delta_0\ll_W 1$, we may use relation \eqref{eq:decomposition_v} again to obtain
\begin{equation}
	\|g\|_{\dot H^1} \sim |\alpha| \sim \|v\|_{\dot H^1}\lesssim \delta(u).
\end{equation}

We now expand the definition of $\delta$ and utilize the orthogonality relations to write 
\begin{equation}
	\delta(u) = \left| \|W + v\|_{\dot H^1}^2 - \|W \|_{\dot H^1}^2\right| = \left|\|v\|_{\dot H^1}^2+2 \alpha \la W,P_{\leq M}W\rah + \la g_1, P_{>M}W\rah \right|.
\end{equation}
Noting that 
\[
\|v\|_{\dot H^1}^2 \lesssim \|v\|_{\dot H^1} |\alpha| \ll |\alpha|\qtq{and} |\langle g_1,P_{>M}W\rangle_{\dot H^1}| \ll \|g\|_{\dot H^1} \ll \alpha,
\]
we derive that $\delta(u)\sim |\alpha|$.  

We turn to the estimate for $|\dot \alpha|$.  We begin by testing the first equation in \eqref{eq:system_v1_v2} against $ P_{\leq M} W$ in $\dot H^1$ to obtain
\begin{align}
|\la  W+v_1\,,\, \Lambda P_{\leq M} W\rah|
&\lesssim \|\Lambda P_{\leq M} W\|_{\dot H^1}(\| W\|_{\dot H^1} +\|v_1\|_{\dot H^1})
\end{align}
and, using the weighted Sobolev embedding \eqref{weighted-sobolev},
\begin{align}
|\la R(v),\, \Delta W_{\leq M} \ra_{L^2}|&\lesssim \|\Delta P_{\leq M}  W\|_{\dot H^1}(\|v\|_{\dot H^1}^2\|W\|_{\dot H^1}+\|v\|_{\dot H^1}^3)\\
&\lesssim M^2 (\|v\|_{\dot H^1}^2 + \|v\|_{\dot H^1}^3	).
\end{align}

Thus, recalling the estimates already obtained in Proposition~\ref{P:new_modulation}, we obtain  
\begin{align}
	|\dot \alpha|\|W_{\leq M}\|_{\dot H^1}^2 &\lesssim \lambda^{-2}\|v_2\|_{\dot H^1} \|\Delta W_{\leq M}\|_{\dot H^1} + |\dot \theta|\,\|v_2\|_{\dot H^1}\|W_{\leq M}\|_{\dot H^1} \\
	&\quad +\bigl|\tfrac{\dot \lambda}{\lambda}\bigr|\left(\|\Lambda W\|_{\dot H^1} \|W_{\leq M}\|_{\dot H^1}+\|v\|_{\dot H^1}(\|\Lambda W_{\leq M}\|_{\dot H^1}+\|W_{\leq M}\|_{\dot H^1})\right)\\ 
	&\quad+\lambda^{-2}\left(\|v\|_{\dot H^1}^2(\|\Delta W_{\leq M}\|_{\dot H^1}\|W\|_{\dot H^1})+\|v\|_{\dot H^1}^3\|\Delta W_{\leq M}\|_{\dot H^1}\right)
	\\ &\lesssim M^2\lambda^{-2}\delta(u).
\end{align}
This finally implies
$$
|\dot \alpha(t)| \lesssim M^2 \lambda^{-2}(t)\delta(u(t)),
$$
for all $t \in I_0$, as desired. 
\end{proof}

\section{Convergence of nonscattering, constrained solutions}

With the new modulation analysis in place, the proofs of Theorems~\ref{thm:Qpm}~and~\ref{thm:threshold} follow along fairly well-known lines.  

In particular, the spectral analysis developed in Section 2 allows us to derive the existence result in Theorem \ref{thm:Qpm} in the same way as \cite{DM_Thre} and \cite{Liu2024arxiv}.  The proof of exponential convergence of unconstrained solutions to the ground state in one time direction also follows along standard lines.  We remark that the radial assumption is still required to obtain this result using current techniques.  The most subtle aspect of the analysis is the exponential convergence of nonscattering constrained solutions to the ground state in one time direction, and so we focus on presenting this case in the present section.  Once the convergence for both constrained/unconstrained solutions is established, the analysis and classification of all threshold solutions, as stated in Theorem \ref{thm:threshold}, also follows as in previous works.

Thus in this final section we will focusing on presenting a proof of the convergence of nonscattering constrained solutions, leveraging results from the previous section on modulation analysis.  We will utilize several existing results from references such as \cite{YZZ22, Liu2024arxiv, DM_Thre, GM21}, as these works consider closely-related problems and many results from these works can be imported directly in our setting. 

Let $u_0$ satisfy $E(u_0) = E(W)$ and $\|u_0\|_{\dot H^1} < \|W\|_{\dot H^1}$, and let $u:(T^-,T^+)\times\R^3\to\C$ denote the corresponding maximal-lifespan solution to \eqref{NLS}.  By the sharp weighted Sobolev embedding estimate, we have $\|u(t)\|_{\dot H^1} \leq \|W\|_{\dot H^1}$ for all $t \in (T^-,T^+)$, and in particular $|\delta(u(t))|\lesssim 1$. We suppose that $u$ does not scatter forward in time, i.e. $\|u\|_{L^{10}_{t,x}([0,T^+)\times \R^3)}=\infty$. 

Standards arguments using concentration-compactness imply that the solution $u \in C^0_tH^1_x([0,T^+)\times \R^3) \cap L^{10}_{t,x}([0,T^+)\times \R^3)$ is almost periodic modulo scaling.  In particular, there exist $N: [0,T^+)\to (0,\infty)$ and a function $C:(0,\infty) \to (0,\infty)$ such that\footnote{The presence of the factor $|x|^{-1}$ in the nonlinearity removes the need for a moving spatial center, as observed in \cite[Theorem 1.2]{GM21}, for example.}
\begin{equation}
\int_{|x|>C(\eta)/N(t)} |\nabla u(t,x)|^2 \, dx + \int_{|\xi|\geq C(\eta)N(t)} |\xi \hat u(t,\xi)|^2 \, d\xi  \leq \eta\qtq{for all}t\in [0,T^+).
\end{equation}
For details, see e.g. \cite[Theorem 1.16]{KV10}, \cite[Theorem 1.2]{GM21}, or \cite[Proposition 6.2]{Liu2024arxiv}.

Equivalently, we have that the set
\begin{equation}
	K = \bigl\{ \tfrac{1}{N(t)^{1/2}} u\bigl({t}, \tfrac{\cdot}{N(t)} \bigr), \quad t \in [0,T^+)  \bigr\}
\end{equation}
is precompact in $\dot H^1$. 

We can rule out the finite-time blowup scenario $T^+<\infty$ using the  reduced Duhamel formula from \cite[Proposition 5.23]{KV13} as in \cite[Proposition 4.2]{GM21}. Thus, we assume that $T^+=+\infty$ and that $u$ does not scatter forward in time. 

Following the proof of \cite[Lemma 9.3]{YZZ22}, for example, the scale function $N(t)$ can be taken to be differentiable almost everywhere and satisfy 
\begin{equation}\label{invlambda2}
	\left|\tfrac{1}{N^2(t_1)}-\tfrac{1}{N^2(t_2)}\right|\lesssim \int_{t_1}^{t_2}\delta(u(s))\,ds\qtq{for all}  t_2>t_1>0.
\end{equation}

The scaling compactness parameter $N(t)$ can also be related to the modulation parameter $\lambda(t)$ from Proposition \ref{P:new_modulation}, using the fact that the set $K$ defined above is precompact. Indeed, following  \cite[Lemma 6.3]{Liu2024arxiv}, there exists $C \in (0,\infty)$ such that
\begin{equation}\label{CompNlambda}
	\tfrac1C < N(t)\lambda(t) < C \,\,\, \mbox{for all} \,\,\, t \in I_0\,\,\, \mbox{given in \eqref{I_0}}.
\end{equation}


Our first task is to conclude that $\delta(u(t)) \to 0$ as $t \to \infty$. To this end, we have the following estimate, based on combining the virial identity with the modulation analysis:

\begin{prop}[Modulated virial, {\cite[Lemma 6.5]{Liu2024arxiv}}]\label{ModVir} A solution $u$ that is precompact modulo scaling on $[0,\infty)$ satisfies\begin{equation}\label{eq:modulated_virial}
    \int_{t_1}^{t_2} \delta(u(s)) \,ds \lesssim \sup_{t \in [t_1,t_2]}\tfrac{1}{N(t)^2} [\delta(t_1) + \delta(t_2)]\qtq{for all} t_2>t_1\geq 0.
\end{equation}
\end{prop}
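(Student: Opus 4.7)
The plan is to run a modulated truncated-virial argument along the lines of \cite[Lemma 6.5]{Liu2024arxiv}, calibrated to the concentration scale $1/N(t)$ and combined with the modulation theory of Section 3 at the endpoints. The overall strategy mirrors the energy-critical NLS arguments of Duyckaerts--Merle, with the refined modulation of Proposition~\ref{P:new_modulation_alpha} taking the place of the classical one.

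First I would fix a smooth radial cutoff $\chi\in C_c^\infty(\R^3)$ with $\chi(x)=|x|^2/2$ on $|x|\leq 1$ and $\mathrm{supp}\,\chi\subset\{|x|\leq 2\}$, set $\chi_R(x)=R^2\chi(x/R)$, and introduce the truncated virial
\[
V_R(t):=2\,\Im\int \nabla\chi_R\cdot\nabla u\,\bar u\,dx.
\]
A Hardy/Cauchy--Schwarz bound yields $|V_R(t)|\lesssim R^2\|u\|_{\dot H^1}^2$, so choosing $R=R(\eta)/\inf_{s\in[t_1,t_2]}N(s)$ with $R(\eta)$ taken from the compactness modulus of the precompact set $K$ and $\eta\ll 1$, we simultaneously have $|V_R(t)|\lesssim \sup_s N(s)^{-2}$ and the tail $\{|x|\geq R/4\}$ carries at most an $\eta$-fraction of the $\dot H^1$ mass of $u(t)$ uniformly for $t\in[t_1,t_2]$.

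The core computation is $V_R'(t)$, obtained from \eqref{NLS}. On $\{|x|\leq R\}$ the cutoff $\chi_R$ coincides with $|x|^2/2$, so the standard stress--energy manipulation for INLS with weight $|x|^{-1}$ reduces $V_R'$ to an explicit quadratic-in-$u$ functional; combining with energy conservation $E[u]=E[W]$ and the Pohozaev relations $\|W\|_{\dot H^1}^2=\int |x|^{-1}W^4=\tfrac{8\pi}{3}$ allows one to reorganize the main term so that it is proportional to $\delta(u(t))$ with the favorable sign, fixed by the hypothesis $\|u\|_{\dot H^1}\leq\|W\|_{\dot H^1}$. The error terms coming from the annulus $\{R\leq|x|\leq 2R\}$, as well as from the non-translation-invariant piece $\chi_R\,\nabla|x|^{-1}$ appearing in the nonlinear contribution, are controlled by the compactness tail bound and yield $V_R'(t)\gtrsim \delta(u(t))-\eta\,\sup_s N(s)^{-2}$ for $\eta$ sufficiently small, so that after integration the error can be absorbed into the LHS.

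Finally I would sharpen the endpoint values via modulation. At a time $t_j$ with $\delta(u(t_j))\ll 1$, Proposition~\ref{P:new_modulation_alpha} yields a decomposition $u(t_j)=e^{-i\theta_j}\lambda_j^{-1/2}[W+v_j](\lambda_j^{-1}\cdot)$ with $\|v_j\|_{\dot H^1}\sim \delta(u(t_j))$; since $V_R$ vanishes identically on the real, stationary ground-state manifold $\{e^{-i\theta}\lambda^{-1/2}W(\lambda^{-1}\cdot)\}$, a change of variables gives $|V_R(t_j)|\lesssim \lambda_j^{-2}\|v_j\|_{\dot H^1}\lesssim N(t_j)^{-2}\,\delta(u(t_j))$ via \eqref{CompNlambda}. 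Outside the modulation regime $\delta(u(t_j))\sim 1$ and the crude bound $|V_R(t_j)|\lesssim N(t_j)^{-2}$ already matches the required form. Integrating $V_R'$ over $[t_1,t_2]$ and rearranging yields
\[
\int_{t_1}^{t_2}\delta(u(s))\,ds\lesssim |V_R(t_1)|+|V_R(t_2)|\lesssim \sup_{t\in[t_1,t_2]}\frac{1}{N(t)^2}\,[\delta(u(t_1))+\delta(u(t_2))],
\]
as claimed. I expect the most delicate step to be handling the nonlinear error generated by $|x|^{-1}$: the combination $\chi_R\nabla|x|^{-1}$ produces terms of order $|x|^{-2}$ that sit on the borderline of Hardy-type estimates and must be controlled by combining the inner compactness bound with the scale-adapted choice of $R$, rather than by any pointwise bound.
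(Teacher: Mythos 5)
The paper does not actually prove this proposition; it is imported verbatim by citation from \cite[Lemma~6.5]{Liu2024arxiv}, so your sketch is a reconstruction of that argument rather than something to compare against an in-text proof. Your skeleton is the right one (truncated virial at the compactness scale $1/N$, main term identified with $4\delta(u(t))$ via energy conservation, the Pohozaev identity and the constraint $\|u(t)\|_{\dot H^1}\leq \|W\|_{\dot H^1}$, and a refined endpoint bound via the modulation decomposition), and the endpoint estimate $|V_R(t_j)|\lesssim R^2\,\delta(u(t_j))\lesssim \sup_t N(t)^{-2}\delta(u(t_j))$ is essentially correct, modulo the slip that with the paper's conventions $\lambda\sim 1/N$ by \eqref{CompNlambda}, so the factor should be $\lambda_j^{2}$, not $\lambda_j^{-2}$.

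The genuine gap is in the treatment of the bulk error. You claim $V_R'(t)\gtrsim \delta(u(t))-\eta\sup_s N(s)^{-2}$ and that "after integration the error can be absorbed into the LHS." It cannot: an error that is merely small \emph{uniformly in $t$} integrates to something of size $\eta\,(t_2-t_1)$, which is not dominated by $\int_{t_1}^{t_2}\delta(u(s))\,ds$ (for the eventual application this integral must be finite on $[0,\infty)$, e.g.\ along $W^{\pm}$ where $\delta$ decays exponentially) nor by the right-hand side $\sup_t N(t)^{-2}[\delta(t_1)+\delta(t_2)]$. The exterior error at time $t$ must instead be shown to be $\leq \varepsilon\,\delta(u(t))$ \emph{pointwise in time}. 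This forces the same two-regime splitting you use only at the endpoints to be run at every $t\in[t_1,t_2]$: when $\delta(u(t))\geq\delta_0$ one chooses the compactness parameter $\eta\ll\delta_0$ so the tail is absorbed directly; when $\delta(u(t))<\delta_0$ one inserts the decomposition $u=e^{-i\theta}\lambda^{-1/2}(W+v)(\cdot/\lambda)$ of Proposition~\ref{P:new_modulation}, uses that the pure-$W$ contribution to the localized virial identity vanishes identically (since $W$ is a static solution with $\delta(W)=0$), and bounds the remaining exterior cross and quadratic terms by $o_\eta(1)\|v\|_{\dot H^1}+\|v\|_{\dot H^1}^2\lesssim o(1)\,\delta(u(t))$ using $\|v\|_{\dot H^1}\sim\delta(u(t))$ and $R/\lambda\gtrsim C(\eta)$. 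Without this step the integrated inequality does not close. (A smaller inaccuracy: the non-translation-invariant contribution is $\nabla\chi_R\cdot\nabla(|x|^{-1})\,|u|^4$, which on the inner region equals $-|x|^{-1}|u|^4$ and is controlled by the weighted Sobolev inequality \eqref{weighted-sobolev}; it is not a borderline $|x|^{-2}$ term.)
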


From the previous proposition, the fact that $\delta(u(t))\lesssim 1$ and the limit ${tN^2(t)} \to +\infty$ (a consequence of the local theory and compactness modulo symmetries, cf. \cite[proof of Lemma~3.3]{DM_Thre}), we have that
\begin{equation}
	\inf_{t \in [t_1, t_2]} \delta(u(t)) \lesssim \bigl(\sup_{t \in [t_1, t_2]} \tfrac{1}{tN^2(t)}\bigr) \frac{t_2}{t_2-t_1}\qtq{for} t_2>t_1\geq 0.
	\end{equation} 
This guarantees 
\[
\liminf_{t \to +\infty} \delta(u(t)) \leq \limsup_{t \to +\infty} \tfrac{1}{tN^2(t)}= 0.
\]
Therefore, there exists a sequence $t_n \to +\infty$ such that $\delta(u(t_n)) \to 0$. 

The existence of such a sequence, the relation \eqref{invlambda2}, and the modulated virial in Proposition~\ref{ModVir} allow us to show that the compactness scaling parameter is uniformly bounded above and away from zero. We begin by proving the lower bound.

\begin{prop}\label{infNt}
	We have that
	$$
	\inf_{t \geq 0} N(t)>0.
	$$
\end{prop}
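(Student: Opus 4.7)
My plan is to combine the two estimates \eqref{invlambda2} and \eqref{eq:modulated_virial} to obtain, for any $0 < t_1 < t_2$,
\[
\left|\tfrac{1}{N^2(t_1)} - \tfrac{1}{N^2(t_2)}\right| \lesssim \sup_{s\in[t_1,t_2]} \tfrac{1}{N^2(s)} \cdot \bigl[\delta(u(t_1)) + \delta(u(t_2))\bigr],
\]
and to exploit this in tandem with the sequence $t_n \to +\infty$ satisfying $\delta(u(t_n)) \to 0$ that was just constructed.

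I would first pass to a subsequence, still denoted $(t_n)$, with $\delta(u(t_n))$ both summable and uniformly smaller than a threshold $\eta$ depending only on the implicit constant above; this is possible precisely because $\delta(u(t_n)) \to 0$. For each $n$ and $s \in [t_n, t_{n+1}]$, I would apply \eqref{invlambda2} on $[t_n, s]$ and bound the resulting integral of $\delta$ via the virial \eqref{eq:modulated_virial} on the full interval $[t_n, t_{n+1}]$. Taking the supremum over $s$ and absorbing the supremum of $1/N^2$ from the right-hand side yields
\[
\sup_{s \in [t_n, t_{n+1}]} \tfrac{1}{N^2(s)} \leq \tfrac{2}{N^2(t_n)}.
\]
Inserting this bound back into the combined estimate evaluated at the endpoints $t_n, t_{n+1}$ gives
\[
\tfrac{1}{N^2(t_{n+1})} \leq \tfrac{1}{N^2(t_n)} \bigl[1 + C'\bigl(\delta(u(t_n))+\delta(u(t_{n+1}))\bigr)\bigr].
\]
Iterating and taking logarithms, the telescoped right-hand side is controlled by the convergent tail of $\sum_k \delta(u(t_k))$, which yields $\sup_n \tfrac{1}{N^2(t_n)} < \infty$. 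Combined with the per-interval supremum bound, this gives $\sup_{s\geq t_1} \tfrac{1}{N^2(s)} < \infty$, and the positivity of $N$ on the compact initial interval $[0,t_1]$ completes the proof.

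The main obstacle I foresee is the apparent circularity of the combined estimate, in which $\sup_{s\in[t_1,t_2]} 1/N^2(s)$ appears on both sides and we have no global upper bound on this supremum a priori. This is precisely what the absorption step above resolves, and it is made available by the freedom in refining the sequence $(t_n)$ so that $\delta(u(t_n)) + \delta(u(t_{n+1}))$ always lies below the universal threshold dictated by the implicit constants.
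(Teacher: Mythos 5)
Your proposal is correct and rests on the same key mechanism as the paper's proof: combining \eqref{invlambda2} with the modulated virial bound \eqref{eq:modulated_virial} along the sequence $t_n$ with $\delta(u(t_n))\to0$, and absorbing the factor $\sup 1/N^2$ from the right-hand side using the smallness of $\delta$ at the endpoints. The paper closes the argument a bit more directly---it performs the absorption on $[t_n,t_m]$ with \emph{both} endpoints drawn from the good sequence, obtaining $\sup_{t\in[t_n,t_m]}\tfrac{1}{N^2(t)}\lesssim \min\bigl\{\tfrac{1}{N^2(t_n)},\tfrac{1}{N^2(t_m)}\bigr\}$ and then letting $m\to\infty$, which avoids your telescoping product and the need to pass to a summable subsequence; also, for the initial interval $[0,t_1]$ you should appeal to \eqref{invlambda2} together with $\delta(u)\lesssim1$ (which makes $1/N^2$ locally Lipschitz, hence bounded on compacts) rather than to mere pointwise positivity of $N$.
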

\begin{proof}
Let $t_n \to \infty$ such that $\delta(u(t_n)) \to 0$. For $t_n\leq t\leq t_m$ we have, from the relation \eqref{invlambda2} and Proposition \ref{ModVir}, that
\begin{equation}
	\bigl|\tfrac{1}{N^2(t)}  - \tfrac{1}{N^2(t_n)}\bigr| + \bigl|\tfrac{1}{N^2(t_m)}  - \tfrac{1}{N^2(t)}\bigr| \lesssim \int_{t_{n}}^{t_m} \delta(u(s))\, ds \lesssim \sup_{\tau \in [t_{n},t_m]}\tfrac{1}{N(\tau)^2} (\delta(u(t_{n})) + \delta(u(t_m))),
\end{equation}
which implies
\begin{equation}\label{Nmn}
	\bigl(\sup_{t \in [t_{n},t_m]}\tfrac{1}{N^2(t)}\bigr) \left(1-o_{{n}}(1)-o_{m}(1) \right)  \leq \min\bigl\{\tfrac{1}{N^2(t_m)}, \tfrac{1}{N^2(t_n)} \bigr\}.
\end{equation}
Taking $n$ sufficiently large and letting $m \to +\infty$, we deduce that
$$
\sup_{t \in [t_{n},\infty)}\tfrac{1}{N^2(t)}(1-o_n(1))\lesssim \tfrac{1}{N^2(t_n)},
$$
proving the desired estimate.\end{proof}

Note that Propositions \ref{eq:modulated_virial} and \ref{infNt} applied to the sequence $t_n \to \infty$ such that $\delta(u(t_n)) \to 0$ show that 
$$\displaystyle\int_t^{\infty} \delta(u(s)) \, ds \lesssim \delta(u(t)),
$$ 
which, together with Gronwall’s inequality, implies 
\begin{equation}\label{Gron}
\displaystyle\int_t^{\infty} \delta(u(s)) \, ds \lesssim e^{-ct}\,\,\,\mbox{for some}\,\,\,c >0.
\end{equation}

We now turn to the proof of the upper bound.
\begin{prop}\label{supNt}
	We have that
	$$
	\sup_{t \geq 0} N(t)<\infty.
	$$
\end{prop}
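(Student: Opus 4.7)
The plan is to mirror the argument used for Proposition~\ref{infNt}, exchanging the roles of upper and lower bounds on $1/N^2(t)$. The same combination of \eqref{invlambda2} and the modulated virial (Proposition~\ref{ModVir}) will now be used to propagate the positivity of $1/N^2(t_n)$ to all later times, with the upper bound $\sup_{[t_n, t_m]} 1/N^2 \lesssim 1/N^2(t_n)$ supplied by the proof of Proposition~\ref{infNt} serving as essential input.

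Concretely, I would first observe that for $t_n \leq t \leq t_m$ along the sequence with $\delta(u(t_n)), \delta(u(t_m)) \to 0$, \eqref{invlambda2} and Proposition~\ref{ModVir} give
\begin{align}
|1/N^2(t) - 1/N^2(t_n)| &\lesssim \int_{t_n}^{t_m} \delta(u(s))\,ds \\
&\lesssim \sup_{\tau \in [t_n, t_m]} 1/N^2(\tau) \cdot [\delta(u(t_n)) + \delta(u(t_m))].
\end{align}
Inserting $\sup_{[t_n, t_m]} 1/N^2 \lesssim 1/N^2(t_n)$ from the proof of Proposition~\ref{infNt} yields
$$|1/N^2(t) - 1/N^2(t_n)| \lesssim (1/N^2(t_n))\cdot[\delta(u(t_n)) + \delta(u(t_m))].$$
Letting $m \to \infty$ along the sequence (so $\delta(u(t_m)) \to 0$) extends the bound to every $t \geq t_n$:
$$|1/N^2(t) - 1/N^2(t_n)| \lesssim (1/N^2(t_n))\,\delta(u(t_n)).$$

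Choosing $n$ large so that the implicit constant times $\delta(u(t_n))$ is at most $\tfrac12$ gives $1/N^2(t) \geq \tfrac12\,(1/N^2(t_n)) > 0$ for all $t \geq t_n$, equivalently $N(t) \leq \sqrt{2}\,N(t_n) < \infty$ on $[t_n, \infty)$. For the remaining compact interval $[0, t_n]$, boundedness of $N$ follows from the precompactness of $K$: if one had $N(t_k) \to \infty$ along some sequence in $[0, t_n]$, then by compactness of $[0, t_n]$ and continuity of $u$ in $\dot H^1$, a subsequence of $u(t_k)$ would converge strongly to some $u^* \neq 0$, but then the rescalings $N(t_k)^{-1/2} u(t_k, \cdot/N(t_k))$ would converge to $0$ only weakly in $\dot H^1$ while retaining a fixed positive norm, contradicting strong precompactness of $K$.

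The main conceptual content is that the argument is a precise mirror of Proposition~\ref{infNt}: the same two-sided structure of Proposition~\ref{ModVir} (with $\delta$ at both endpoints going to zero) that produced an upper bound for $1/N^2$ there now produces a matching lower bound here, once the upper bound is fed back in as a fixed input. No additional modulation information beyond that of Proposition~\ref{P:new_modulation_alpha} is needed, and the only delicate point is the handling of the initial compact interval via precompactness of $K$ rather than via continuity of $N$.
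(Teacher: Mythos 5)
Your proof is correct and follows essentially the same route as the paper: both arguments rest on the inequality \eqref{Nmn} coming from \eqref{invlambda2} and Proposition~\ref{ModVir}, taking $n$ large and letting $m\to\infty$ to propagate the positivity of $1/N^2(t_n)$ to all $t\geq t_n$ (the paper phrases this as existence and positivity of $\lim_{t\to\infty}1/N^2(t)$, using \eqref{Gron}, while you give the equivalent direct quantitative lower bound). Your explicit treatment of the compact interval $[0,t_n]$ via precompactness of $K$ is a harmless addition that the paper leaves implicit.
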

\begin{proof}
From relations \eqref{invlambda2} and \eqref{Gron} we deduce that the limit  $\lim_{t\to \infty}[N(t)]^{-2}$ exists. It suffices to show that this limit is positive. If not, then using inequality \eqref{Nmn}, we take $n$ sufficiently large and let $m \to \infty$ to obtain
$$
\frac{1}{N^2(t)}=0 \,\,\,\mbox{for all}\,\,\, t\geq t_n,
$$
which yields a contradiction. 
\end{proof}
%
%


Now, for any $t$ such that $(t_n,t)\subset I_0$ (the modulation interval), we have from the relation \eqref{CompNlambda} and Propositions~\ref{infNt}~and~\ref{supNt} that the modulation scaling parameter $\lambda(t)$ is bounded away from zero. Thus, Proposition \ref{P:new_modulation_alpha} gives
$$
|\alpha(t)-\alpha(t_n)|\lesssim \int_{t_n}^{t}|\dot{\alpha}(s)|\,ds\lesssim_M \int_{t_n}^{t}\delta(u(s)) \, ds
$$
so that
\begin{equation}\label{deltaalpha}
	\delta(u(t)) \sim |\alpha(t)| \lesssim_M |\alpha(t_n)| + e^{-ct_n}\lesssim_M \delta(u(t_n)) + e^{-ct_n}.
\end{equation}

This suffices to conclude that $\lim_{t\to\infty}\delta(u(t))=0$, so that that $[t_n,\infty)\subset I_0$ for all sufficiently large $n$. Consequently, the solution eventually never leaves the modulation region, and in fact (recalling \eqref{mod-bds-pt1}) the solution is exponentially close to a suitably modulated soliton. Moreover, the relation \eqref{Gron} combined with Propositions \ref{P:new_modulation} and \ref{P:new_modulation_alpha} guarantees the following:
\begin{equation}
|\alpha(t)| \to 0, \quad \lambda(t) \to \lambda_0 \in (0,\infty) \qtq{and} \theta(t) \to \theta_{0}\in \R / (2\pi \mathbb Z)\,\,\,\mbox{as}\,\,\,t \to \infty,
\end{equation}
with exponential convergence of all of these parameters.



Note that the solution $u$ under consideration must necessarily scatter backward in time. Indeed, if  
$\|u\|_{L^{10}_{t,x}((T^{-},0]\times \mathbb{R}^{3})} = +\infty$,  
then similar arguments imply that $\lim_{t\to -\infty} \delta(u(t)) = 0$. Therefore, by the modulated virial estimate in Proposition~\ref{ModVir} (which in this case also holds for negative times), we obtain
\[
\int_{-\infty}^{+\infty} \delta(u(t))\, dt = 0,
\]
which contradicts the assumption $\|u_{0}\|_{\dot H^{1}} < \|W\|_{\dot H^{1}}$.

To summarize the previous discussion, we have proved the following:

\begin{prop}[Convergence of nonscattering, constrained solutions]\label{prop_subcrit}
Let $u$ be a solution to \eqref{NLS} with maximal interval of existence $(T^-,T^+)$, such that $E(u_0) = E(W)$, $\|u_0\|_{\dot H^1} < \|W\|_{\dot H^1}$ and $\|u\|_{L^{10}_{t,x}([0,T^+)\times \R^3)} = \infty$. Then $T^+=\infty$ and there exist $\lambda_0>0$, $\theta_0 \in [0,2\pi)$ and $c>0$ such that
\begin{equation}\label{exp_conv_sub}
    \|u(t)-e^{-i\theta_0}\lambda_0^{-1/2}W(\lambda_0^{-1} \cdot )\|_{\dot{H}^1_x} \lesssim  e^{-ct}\,\,\,\mbox{for all}\,\,\,t\geq 0.
\end{equation}
Moreover, $\|u\|_{L^{10}_{t,x}((T^-,0]\times \R^3)} < +\infty$ and hence $u$ scatters backward in time.
\end{prop}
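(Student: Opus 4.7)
The plan is to prove Proposition~\ref{prop_subcrit} in four stages, leveraging compactness modulo scaling, the modulated virial estimate, and the new modulation analysis of Section~3.

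First I would invoke standard concentration-compactness arguments (as in \cite{KV10,GM21,Liu2024arxiv}) to conclude from the nonscattering hypothesis that the orbit of $u$ is precompact modulo scaling, producing a scale function $N(t)$ with the usual frequency/spatial localization properties; the absence of a translation parameter is automatic from the localizing factor $|x|^{-1}$. To rule out $T^+<\infty$, I would appeal to the reduced Duhamel formula as in \cite[Proposition~5.23]{KV13} and \cite[Proposition~4.2]{GM21}, so that $T^+=+\infty$.

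Next I would produce a sequence $t_n\to\infty$ with $\delta(u(t_n))\to 0$: combining the modulated virial estimate of Proposition~\ref{ModVir} with the uniform bound $\delta(u(t))\lesssim 1$ and the local-theory fact that $tN^2(t)\to\infty$ gives $\liminf_{t\to\infty}\delta(u(t))=0$. Feeding this sequence back into \eqref{invlambda2} and Proposition~\ref{ModVir} yields, via a telescoping argument in the spirit of Propositions~\ref{infNt}~and~\ref{supNt}, both $\inf_t N(t)>0$ and $\sup_t N(t)<\infty$. With $N(t)\sim 1$, Proposition~\ref{ModVir} applied along $\{t_n\}$ becomes $\int_t^\infty\delta(u(s))\,ds\lesssim\delta(u(t))$, and Gronwall upgrades this to the exponential bound $\int_t^\infty\delta(u(s))\,ds\lesssim e^{-ct}$.

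The crucial step, most specifically tailored to the singular nonlinearity, is the bootstrap from this integrated decay to pointwise exponential decay of $\delta(u(t))$ itself. Using \eqref{CompNlambda} to see that $\lambda(t)$ is bounded away from zero on the modulation interval, Proposition~\ref{P:new_modulation_alpha} yields $|\dot\alpha(t)|\lesssim_M\delta(u(t))$. Integrating along $(t_n,t)\subset I_0$ and using $\delta(u(t))\sim|\alpha(t)|$ gives $\delta(u(t))\lesssim_M\delta(u(t_n))+e^{-ct_n}$, which forces $\delta(u(t))\to 0$ and hence $[t_n,\infty)\subset I_0$ for all sufficiently large $n$. Combining this with \eqref{Gron} and Propositions~\ref{P:new_modulation}~and~\ref{P:new_modulation_alpha} shows that $\alpha(t)$, $\lambda(t)-\lambda_0$, and $\theta(t)-\theta_0$ all converge to zero exponentially, producing \eqref{exp_conv_sub}. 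This is precisely the step where the standard scheme would break down, since it would require control of $|\dot\beta|$ for the decomposition \eqref{Decomp-v1} rather than of $|\dot\alpha|$ for the smoothed decomposition \eqref{Decomp-v2}; overcoming this obstruction is the main technical obstacle, and it is exactly what the orthogonality against $P_{\leq M}W$ in Section~3 was designed to enable.

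Finally, backward scattering I would handle by contradiction: if $\|u\|_{L^{10}_{t,x}((T^-,0]\times\R^3)}=\infty$, then the same scheme applied on negative times gives $T^-=-\infty$ and $\delta(u(t))\to 0$ as $t\to -\infty$. A two-sided application of Proposition~\ref{ModVir}, using the decay of $\delta$ on both endpoints, then forces $\int_{-\infty}^{+\infty}\delta(u(s))\,ds=0$, so that $\|u(t)\|_{\dot H^1}\equiv\|W\|_{\dot H^1}$, contradicting the strict inequality $\|u_0\|_{\dot H^1}<\|W\|_{\dot H^1}$.
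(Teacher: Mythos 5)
Your proposal is correct and follows essentially the same route as the paper: compactness modulo scaling and the reduced Duhamel argument to get $T^+=\infty$, the modulated virial plus $tN^2(t)\to\infty$ to extract $t_n$ with $\delta(u(t_n))\to 0$, the telescoping bounds on $N(t)$ and Gronwall to get integrated exponential decay, the $|\dot\alpha|$ estimate from the smoothed decomposition to upgrade to pointwise decay and parameter convergence, and the two-sided virial contradiction for backward scattering. No gaps to flag.
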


As described at the beginning of this section, with this result in hand, we can follow along standard arguments (as in \cite{DM_Thre, Liu2024arxiv})  to complete the proof of the main result, Theorem~\ref{thm:threshold}.


\vspace{0.5cm}
\noindent 
\textbf{Acknowledgments.} L.C. was partially supported by Conselho Nacional de Desenvolvimento Cient\'ifico e Tecnol\'ogico - CNPq grants 307733/2023-8 and 404800/2024-6 and Funda\c{c}\~ao de Amparo a Pesquisa do Estado de Minas Gerais - FAPEMIG grants APQ-03186-24 and APQ-03752-25. J.M. was partially supported by NSF grant DMS-2350225, Simons Foundation grant MPS-TSM-00006622 and CAPES grant 88887.937783/2024-00. L.G.F. was partially supported by Conselho Nacional de Desenvolvimento Cient\'ifico e Tecnol\'ogico - CNPq grant 307323/2023-4. L.C. and L.G.F. were partially supported by Coordena\c{c}\~ao de Aperfei\c{c}oamento de Pessoal de N\'ivel Superior - CAPES grant 88881.974077/2024-01 and CAPES/COFECUB grant 88887.879175/2023-00. This work was completed while L.C. was a visiting scholar at University of Oregon in 2025-26 under the support of Conselho Nacional de Desenvolvimento Cient\'ifico e Tecnol\'ogico - CNPq, for which the author is very grateful as it boosted the energy into the research project.

%
%
%
%
%
%
%

\addtocontents{toc}{\protect\vspace*{\baselineskip}}

\bibliographystyle{natbib} 
\begin{bibdiv}
\begin{biblist}

\bib{CamposCardoso2022}{article}{
      author={Campos, Luccas},
      author={Cardoso, Mykael},
       title={A virial-{M}orawetz approach to scattering for the non-radial
  inhomogeneous {NLS}},
        date={2022},
        ISSN={0002-9939,1088-6826},
     journal={Proc. Amer. Math. Soc.},
      volume={150},
      number={5},
       pages={2007\ndash 2021},
         url={https://doi.org/10.1090/proc/15680},
      review={\MR{4392336}},
}

\bib{CFMarxiv25}{article}{
      author={Campos, Luccas},
      author={Farah, Luiz~Gustavo},
      author={Murphy, Jason},
       title={Threshold solutions for the $3d$ cubic {INLS}: the
  energy-subcritical case},
     journal={In preparation},
}

\bib{CFR22}{article}{
      author={Campos, Luccas},
      author={Farah, Luiz~Gustavo},
      author={Roudenko, Svetlana},
       title={Threshold solutions for the nonlinear {S}chr\"{o}dinger
  equation},
        date={2022},
        ISSN={0213-2230},
     journal={Rev. Mat. Iberoam.},
      volume={38},
      number={5},
       pages={1637\ndash 1708},
         url={https://doi.org/10.4171/rmi/1337},
      review={\MR{4502077}},
}

\bib{CM_Threshold_INLS_2023}{article}{
      author={Campos, Luccas},
      author={Murphy, Jason},
       title={Threshold solutions for the intercritical inhomogeneous {NLS}},
        date={2023},
        ISSN={0036-1410},
     journal={SIAM J. Math. Anal.},
      volume={55},
      number={4},
       pages={3807\ndash 3843},
         url={https://doi.org/10.1137/22M1497663},
      review={\MR{4631010}},
}

\bib{CHL20}{article}{
      author={Cho, Yonggeun},
      author={Hong, Seokchang},
      author={Lee, Kiyeon},
       title={On the global well-posedness of focusing energy-critical
  inhomogeneous {NLS}},
        date={2020},
        ISSN={1424-3199},
     journal={J. Evol. Equ.},
      volume={20},
      number={4},
       pages={1349\ndash 1380},
         url={https://doi.org/10.1007/s00028-020-00558-1},
      review={\MR{4181951}},
}

\bib{CL21}{article}{
      author={Cho, Yonggeun},
      author={Lee, Kiyeon},
       title={On the focusing energy-critical inhomogeneous {NLS}: weighted
  space approach},
        date={2021},
        ISSN={0362-546X},
     journal={Nonlinear Anal.},
      volume={205},
       pages={Paper No. 112261, 21},
         url={https://doi.org/10.1016/j.na.2021.112261},
      review={\MR{4212088}},
}

\bib{DM08}{article}{
      author={Duyckaerts, Thomas},
      author={Merle, Frank},
       title={Dynamics of threshold solutions for energy-critical wave
  equation},
        date={2008},
        ISSN={1687-3017},
     journal={Int. Math. Res. Pap. IMRP},
       pages={Art ID rpn002, 67},
         url={https://doi.org/10.1093/imrp/rpn002},
      review={\MR{2470571}},
}

\bib{DM_Thre}{article}{
      author={Duyckaerts, Thomas},
      author={Merle, Frank},
       title={Dynamic of threshold solutions for energy-critical {NLS}},
        date={2009},
        ISSN={1016-443X},
     journal={Geom. Funct. Anal.},
      volume={18},
      number={6},
       pages={1787\ndash 1840},
         url={https://doi.org/10.1007/s00039-009-0707-x},
      review={\MR{2491692}},
}

\bib{DR10}{article}{
      author={Duyckaerts, Thomas},
      author={Roudenko, Svetlana},
       title={Threshold solutions for the focusing 3{D} cubic {S}chr\"{o}dinger
  equation},
        date={2010},
        ISSN={0213-2230},
     journal={Rev. Mat. Iberoam.},
      volume={26},
      number={1},
       pages={1\ndash 56},
         url={https://doi.org/10.4171/RMI/592},
      review={\MR{2662148}},
}

\bib{GM21}{article}{
      author={Guzm\'{a}n, Carlos~M.},
      author={Murphy, Jason},
       title={Scattering for the non-radial energy-critical inhomogeneous
  {NLS}},
        date={2021},
        ISSN={0022-0396},
     journal={J. Differential Equations},
      volume={295},
       pages={187\ndash 210},
         url={https://doi.org/10.1016/j.jde.2021.05.055},
      review={\MR{4268726}},
}

\bib{GuzmanXu2025}{article}{
      author={Guzm\'{a}n, Carlos~M.},
      author={Xu, Chengbin},
       title={Dynamics of the non-radial energy-critical inhomogeneous {NLS}},
        date={2025},
        ISSN={0926-2601,1572-929X},
     journal={Potential Anal.},
      volume={63},
      number={2},
       pages={601\ndash 630},
         url={https://doi.org/10.1007/s11118-024-10183-z},
      review={\MR{4953686}},
}

\bib{KV10}{article}{
      author={Killip, Rowan},
      author={Visan, Monica},
       title={The focusing energy-critical nonlinear {S}chr\"{o}dinger equation
  in dimensions five and higher},
        date={2010},
        ISSN={0002-9327},
     journal={Amer. J. Math.},
      volume={132},
      number={2},
       pages={361\ndash 424},
         url={https://doi.org/10.1353/ajm.0.0107},
      review={\MR{2654778}},
}

\bib{KV13}{incollection}{
      author={Killip, Rowan},
      author={Visan, Monica},
       title={Nonlinear {S}chr\"{o}dinger equations at critical regularity},
        date={2013},
   booktitle={Evolution equations},
      series={Clay Math. Proc.},
      volume={17},
   publisher={Amer. Math. Soc., Providence, RI},
       pages={325\ndash 437},
      review={\MR{3098643}},
}

\bib{LZ09}{article}{
      author={Li, Dong},
      author={Zhang, Xiaoyi},
       title={Dynamics for the energy critical nonlinear {S}chr\"{o}dinger
  equation in high dimensions},
        date={2009},
        ISSN={0022-1236},
     journal={J. Funct. Anal.},
      volume={256},
      number={6},
       pages={1928\ndash 1961},
         url={https://doi.org/10.1016/j.jfa.2008.12.007},
      review={\MR{2498565}},
}

\bib{LinZeng}{article}{
      author={Lin, Zhiwu},
      author={Zeng, Chongchun},
       title={Instability, index theorem, and exponential trichotomy for linear
  {H}amiltonian {PDE}s},
        date={2022},
        ISSN={0065-9266},
     journal={Mem. Amer. Math. Soc.},
      volume={275},
      number={1347},
       pages={v+136},
         url={https://doi.org/10.1090/memo/1347},
      review={\MR{4352468}},
}

\bib{LiuMiaoZheng2025}{article}{
      author={Liu, Xuan},
      author={Miao, Changxing},
      author={Zheng, Jiqiang},
       title={Global well-posedness and scattering for mass-critical
  inhomogeneous {NLS} when {$d\ge 3$}},
        date={2025},
        ISSN={0025-5874,1432-1823},
     journal={Math. Z.},
      volume={311},
      number={3},
       pages={Paper No. 54, 38},
         url={https://doi.org/10.1007/s00209-025-03855-y},
      review={\MR{4958518}},
}

\bib{Liu2024arxiv}{article}{
      author={Liu, Xuan},
      author={Yang, Kai},
      author={Zhang, Ting},
       title={Dynamics of threshold solutions for the energy-critical
  inhomogeneous {NLS}},
        date={2024},
     journal={arXiv preprint arXiv:2409.00073},
}

\bib{MiaoMurphyZheng2021}{article}{
      author={Miao, Changxing},
      author={Murphy, Jason},
      author={Zheng, Jiqiang},
       title={Scattering for the non-radial inhomogeneous {NLS}},
        date={2021},
        ISSN={1073-2780,1945-001X},
     journal={Math. Res. Lett.},
      volume={28},
      number={5},
       pages={1481\ndash 1504},
         url={https://doi.org/10.4310/mrl.2021.v28.n5.a9},
      review={\MR{4471717}},
}

\bib{Murphy2022}{article}{
      author={Murphy, Jason},
       title={A simple proof of scattering for the intercritical inhomogeneous
  {NLS}},
        date={2022},
        ISSN={0002-9939,1088-6826},
     journal={Proc. Amer. Math. Soc.},
      volume={150},
      number={3},
       pages={1177\ndash 1186},
         url={https://doi.org/10.1090/proc/15717},
      review={\MR{4375712}},
}

\bib{YZZ22}{article}{
      author={Yang, Kai},
      author={Zeng, Chongchun},
      author={Zhang, Xiaoyi},
       title={Dynamics of threshold solutions for energy critical {NLS} with
  inverse square potential},
        date={2022},
        ISSN={0036-1410},
     journal={SIAM J. Math. Anal.},
      volume={54},
      number={1},
       pages={173\ndash 219},
         url={https://doi.org/10.1137/21M1406003},
      review={\MR{4358028}},
}

\end{biblist}
\end{bibdiv}



\newcommand{\Addresses}{{
  \bigskip
  \footnotesize

  L. Campos, \textsc{Department of Mathematics, Universidade Federal de Minas Gerais, Brazil}\par\nopagebreak
  \textit{E-mail address:} \texttt{luccas@ufmg.br}\\
  
L. G. Farah, \textsc{Department of Mathematics, Universidade Federal de Minas Gerais, Brazil}\par\nopagebreak
  \textit{E-mail address:} \texttt{farah@mat.ufmg.br}\\
  
J. Murphy, \textsc{Department of Mathematics, University of Oregon, Eugene, OR, USA}\par\nopagebreak
  \textit{E-mail address:} \texttt{jamu@uoregon.edu}  
}}
\setlength{\parskip}{0pt}
\Addresses
\batchmode

\end{document}